\def\End{{\rm End}}
\def\deg{\text{deg}\,}
\def\dbar{\bar\partial}
\def\R{{\mathbb R}}
\def\C{{\mathbb C}}
\def\Cn{\C^n}
\def\D{{\mathcal D}}
\def\PM{{\mathcal{PM}}}
\def\Hom{{\rm Hom\, }}
\def\codim{{\rm codim\,}}
\def\Im{{\rm Im\, }}
\def\Z{{\mathbb Z}}
\def\Ok{{\mathcal O}}
\newcommand{\Com}[1]{}
\DeclareMathOperator{\Id}{Id}
\DeclareMathOperator{\supp}{supp}
\DeclareMathOperator{\ann}{ann}
\DeclareMathOperator{\rank}{rank}
\DeclareMathOperator{\im}{im}
\DeclareMathOperator{\coker}{coker}
\DeclareMathOperator{\Ext}{Ext}
\def\be{\begin{equation}}
\def\ee{\end{equation}}
\newtheorem{thm}{Theorem}[section]
\newtheorem{lma}[thm]{Lemma}
\newtheorem{cor}[thm]{Corollary}
\newtheorem{prop}[thm]{Proposition}
\theoremstyle{definition}
\theoremstyle{remark}
\newtheorem{preremark}[thm]{Remark}
\newtheorem{preex}[thm]{Example}
\newenvironment{remark}{\begin{preremark}}{\end{preremark}}
\newenvironment{ex}{\begin{preex}}{\end{preex}}
\numberwithin{equation}{section}
\begin{document}

\title{A comparison formula for residue currents}

\date{\today}

\author{Richard L\"ark\"ang}

\thanks{The author was supported by the Swedish Research Council.}

\address{Richard L\"ark\"ang\\ Department of
  Mathematics\\Chalmers University of Technology and the University of
  Gothenburg\\412 96 G\"oteborg\\Sweden}

\email{larkang@chalmers.se}

\subjclass{32A27, 32C30}

\begin{abstract}
    Given two ideals $\mathcal{I}$ and $\mathcal{J}$ of holomorphic functions such that
    $\mathcal{I} \subseteq \mathcal{J}$, we describe a comparison formula relating the
    Andersson-Wulcan currents of $\mathcal{I}$ and $\mathcal{J}$.
    More generally, this comparison formula holds for residue currents associated
    to two generically exact Hermitian complexes together with a morphism
    between the complexes.

    One application of the comparison formula is a generalization
    of the transformation law for Coleff-Herrera products to Andersson-Wulcan currents
    of Cohen-Macaulay ideals. We also use it to give an analytic proof by means of residue currents
    of theorems of Hickel, Vasconcelos and Wiebe related to the Jacobian ideal of a holomorphic mapping.
\end{abstract}

\maketitle

\section{Introduction}

The theory of residue currents of Coleff-Herrera, Dickenstein-Sessa,
Passare-Tsikh-Yger, Andersson-Wulcan and others
has provided a strong tool for proving different results.
For example, it has been used to prove
results about membership problems in commutative algebra, including Brian\c{c}on-Skoda type results in \cites{ASS,AW3,Sz}.
However, there are similar results which appear natural to approach by such methods,
but which have so far not been possible to prove in this way due to lack of precise enough description of the involved 
residue currents.

In this paper we introduce a comparison formula for residue currents, generalizing the
classical transformation law for complete intersections, which allows for expressing
residue currents in \cite{AW1} and \cite{PTY} in terms of ``simpler'' currents.
In Section~\ref{ssect:intro-trans} to Section~\ref{ssect:intro-prescribed} we
discuss various applications of this formula. Some of the applications are elaborated
in this article, others are from later work after the appearance of the first version
of this article.
One application is that the comparison formula gives precise enough information about 
residue currents to give analytic proofs of theorems of Hickel, Vasconcelos and Wiebe,
Theorem~\ref{thm:vasc} and Corollary~\ref{corhickel}. These results 
had previously only been proven by algebraic means.
Other applications of the comparison formula include the results in \cite{Lar4}, where it
is used to construct residue currents with prescribed annihilator
ideals on singular varieties, and in \cite{LW}, where it is used to obtain
precise descriptions of residue currents associated to Artinian monomial ideals.

\subsection{The transformation law}

We begin by recalling the transformation law, which our formula is a generalization of.
Let $f = (f_1,\dots,f_p)$ be a tuple of germs of holomorphic functions at the origin in $\Cn$
defining a complete intersection, i.e., so that $\codim Z(f) = p$. Associated to $f$, there exists
a current
\begin{equation} \label{comp:eqchp}
    \mu^f = \dbar \frac{1}{f_p}\wedge \dots\wedge \dbar \frac{1}{f_1},
\end{equation}
called the \emph{Coleff-Herrera product} of $f$, which was introduced in \cite{CH}.
We let $\ann_{\Ok} \mu^f$ be the annihilator of $\mu^f$, i.e.,
the holomorphic functions $g$ such that $g \mu^f = 0$, and we let $\mathcal{J}(f)$
be the ideal generated by $f$.
One of the fundamental properties of the Coleff-Herrera product is the \emph{duality theorem},
which says that $\ann_{\Ok} \mu^f = \mathcal{J}(f)$. The duality theorem was proven 
independently by Dickenstein and Sessa, \cite{DS}, and Passare, \cite{PMScand}.

Another fundamental property of the Coleff-Herrera product is that it satisfies
the \emph{transformation law}. Earlier versions of the transformation law
involving cohomological residues (Grothendieck residues) exist,
see for example \cite{Tong}*{(4.3)} and \cite{GH}*{p.~657}.
\begin{thm}
    Let $f = (f_1,\dots,f_p)$ and $g = (g_1,\dots,g_p)$ be tuples of holomorphic functions defining
    complete intersections. Assume there exists a matrix $A$ of holomorphic functions such that
    $f = g A$. Then
    \begin{equation*}
        \dbar \frac{1}{g_p}\wedge\dots\wedge\dbar\frac{1}{g_1}=
        (\det A) \dbar \frac{1}{f_p}\wedge\dots\wedge\dbar\frac{1}{f_1}.
    \end{equation*}
\end{thm}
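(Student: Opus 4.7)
The plan is to combine the duality theorem for Coleff-Herrera products with a cyclicity (uniqueness) property for Coleff-Herrera currents annihilated by a complete intersection ideal, after verifying the identity on the open dense set where $\det A \neq 0$.

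From $f = gA$ we have $\mathcal{J}(f) \subseteq \mathcal{J}(g)$; since both ideals are complete intersections of codimension $p$, $Z(f) = Z(g)$, and both $\mu^g$ and $(\det A)\mu^f$ are $\dbar$-closed pseudomeromorphic $(0,p)$-currents supported on this common zero set. Using the adjugate matrix $B = A^{\mathrm{adj}}$, from $AB = (\det A) I$ and $f = gA$ one obtains $g_k \det A = \sum_j B_{jk} f_j \in \mathcal{J}(f)$, so the duality theorem $\ann \mu^f = \mathcal{J}(f)$ yields $g_k (\det A)\mu^f = 0$. Thus $(\det A)\mu^f$, like $\mu^g$, is a Coleff-Herrera current on $Z(g)$ annihilated by $\mathcal{J}(g)$. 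The cyclicity of the module of such currents over $\Ok/\mathcal{J}(g)$ (a form of local duality for the Gorenstein ring $\Ok/\mathcal{J}(g)$) yields $(\det A)\mu^f = h\,\mu^g$ for some $h \in \Ok$ unique modulo $\mathcal{J}(g)$, and it remains to show $h \equiv 1 \pmod{\mathcal{J}(g)}$.

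To identify $h$, I would work on the open dense set $U = \{\det A \neq 0\}$, where $A$ is a holomorphic unit and $g = fA^{-1}$. The single-variable case is immediate from the chain of equalities $A\,\dbar(1/(g_1 A)) = \dbar(A/(g_1 A)) = \dbar(1/g_1)$, using only that $A$ is holomorphic. For general $p$ on $U$, one reduces to this case by writing the invertible matrix $A$ locally as a product of elementary invertible holomorphic matrices (permutations, diagonal units, and shears) and treating each factor in turn, using the antisymmetry of $\mu^f$ in its entries (a consequence of Coleff-Herrera symmetry for complete intersections).

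The principal obstacle is the passage from ``$h \equiv 1$ on $U$'' to ``$h \equiv 1 \pmod{\mathcal{J}(g)}$'' in the possibly non-reduced quotient $\Ok/\mathcal{J}(g)$, since naive pointwise comparison only gives $h - 1 \in \sqrt{\mathcal{J}(g)}$. This is handled by pairing both currents against holomorphic test functions and invoking local Grothendieck duality for the Gorenstein ring $\Ok/\mathcal{J}(g)$, which reduces the identity to the classical algebraic transformation law for the canonical generator of the dualizing module, verifiable on the generic stratum $U$.
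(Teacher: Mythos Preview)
Your approach differs from the paper's main proof, which deduces the transformation law as a special case of its comparison formula for Andersson--Wulcan currents: with $a_k = \bigwedge^k A$ the induced morphism between the Koszul complexes, one has $R^{\mathcal J(g)} a_0 - a R^{\mathcal J(f)} = \nabla M$, and since every component of $M$ is a pseudomeromorphic $(0,q)$-current with $q < p$ supported on a set of codimension $\geq p$, the dimension principle kills $M$ and $\mu^g = a_p\mu^f = (\det A)\mu^f$ falls out directly. Your route---use cyclicity of the $\Ok/\mathcal J(g)$-module of Coleff--Herrera currents to write $(\det A)\mu^f = h\,\mu^g$ and then pin down $h$---is closer in spirit to the paper's alternative proof via ${\mathcal Ext}$, but your mechanism for determining $h$ is different, and that is where the gap lies.

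There is a minor inaccuracy (the claim $Z(f) = Z(g)$ fails in general; take $g=(z_1,z_2)$, $f=(z_1,z_2 z_3)$ in $\C^3$) which is harmless for your annihilator argument. The genuine problem is identifying $h$ via $U = \{\det A\neq 0\}$. This set can miss $Z(g)$ entirely: already for $p=n=1$, $g=z$, $f=z^2$, $A=(z)$ one has $U\cap Z(g)=\emptyset$, so both $\mu^g$ and $(\det A)\mu^f$ vanish on $U$ and the generic verification yields \emph{no} information about $h$. Your proposed patch---reduce via residue pairings to ``the classical algebraic transformation law \ldots\ verifiable on the generic stratum $U$''---inherits the same defect: whatever identity you reduce to lives in the (typically non-reduced) ring $\Ok/\mathcal J(g)$ and cannot be checked on an open set disjoint from $Z(g)$; invoking the algebraic transformation law as an input is essentially assuming what is to be proved. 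What actually determines $h$ in the paper's alternative argument is not a generic check but the \emph{naturality} under $\pi^*$ of the isomorphism $\mathcal H^p(\Hom(E_\bullet,\Ok))\cong \mathcal H^p(\Hom(\Ok/\mathcal J,\mathcal C^{0,\bullet}))$ realized by $\xi\mapsto \xi R_p$, while the main proof avoids the issue altogether by never leaving the germ at $0$.
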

In the setting of Coleff-Herrera products the transformation law was first stated in
\cite{DS}, and it was explained that the proof can be reduced to the absolute case (when $p = n$)
and cohomological residues together with the technique from \cite{CH}
of fibered residues. An elaboration of this proof can be found in \cite{DS2}.

For cohomological residues as in \cite{GH} the idea of the proof is that if
$dg_1\wedge\dots\wedge dg_n$ is non-vanishing and $A$ is invertible, then
the transformation law is essentially the change of variables formula for
integrals.

In the case when $p = n$ the transformation law combined with the Nullstellensatz
allow to express in an explicit fashion the action of $\mu^f$, see for example
\cite{TsikhBook}*{p.~22}. Essentially the same idea is also used in \cite{GH}
to prove the duality theorem for Grothendieck residues by using the transformation law.

One particular case of the transformation law is when we choose different generators
$f' = (f'_1,\dots,f'_p)$ of the ideal generated by $f$. Then the Coleff-Herrera product
of $f'$ differs from the one of $f$ only by an invertible holomorphic function, and
hence it can essentially be considered as a current associated to the ideal
$\mathcal{J}(f)$.

The requirement that $f = gA$ means that $\mathcal{J}(f)\subseteq\mathcal{J}(g)$.
If we consider the Coleff-Herrera product of $g$ as a current associated
to the ideal $\mathcal{J}(g)$, then the transformation law says that the inclusion
$\mathcal{J}(f) \subseteq \mathcal{J}(g)$ implies that we can
express the Coleff-Herrera product of $\mathcal{J}(g)$ in terms of
the Coleff-Herrera product of $\mathcal{J}(f)$.

\subsection{A comparison formula for Andersson-Wulcan currents}

Consider an arbitrary ideal $\mathcal{J} \subseteq \Ok = \Ok_{\Cn,0}$ of holomorphic functions.
Throughout this article we let $\Ok$ denote $\Ok_{\Cn,0}$, the ring of germs of holomorphic
functions at the origin in $\Cn$, unless otherwise stated.
Let $(E,\varphi)$ be a \emph{Hermitian resolution} of $\Ok/\mathcal{J}$,
\begin{equation*}
    0 \to E_N \xrightarrow[]{\varphi_N} E_{N-1} \to \dots \xrightarrow[]{\varphi_1} E_0 \to \Ok/\mathcal{J} \to 0,
\end{equation*}
i.e., a free resolution of $\Ok/\mathcal{J}$ where the free modules are equipped
with Hermitian metrics.
Given $(E,\varphi)$, Andersson and Wulcan constructed in \cite{AW1} a current $R^E$
such that $\ann_{\Ok} R^E = \mathcal{J}$, where $R^E = \sum_{k=p}^N R^E_k$,
$p = \codim Z(\mathcal{J})$, and $R^E_k$ are $\Hom(E_0,E_k)$-valued $(0,k)$-currents.
We will sometimes denote the current $R^E$ by $R^{\mathcal{J}}$, although it
depends on the choice of Hermitian resolution $E$ of $\Ok/\mathcal{J}$.
We refer to Section~\ref{sectawcurrents} for a more thorough description of the
current $R^E$.
As mentioned above, such currents have been used to study membership problems.
Another important application has been to construct solutions to the 
$\dbar$-equation on singular varieties, \cites{AS2,AS3}.

In case $\mathcal{J}$ is a complete intersection defined by a tuple $f$,
then $\mathcal{J}$ has an explicit free resolution; the Koszul complex of $f$.
In that case, the Andersson-Wulcan current associated to the Koszul complex coincides
with the Coleff-Herrera product of $f$, see Section~\ref{ssectbm}.

We now consider two ideals $\mathcal{I}$ and $\mathcal{J}$ such that $\mathcal{I}\subseteq \mathcal{J}$,
and free resolutions $(E,\varphi)$ and $(F,\psi)$ of $\Ok/\mathcal{J}$ and $\Ok/\mathcal{I}$ respectively.
If we choose minimal free resolutions, then in particular $\rank E_0 = \rank F_0 = 1$, i.e., $E_0 \cong \Ok \cong F_0$,
and we let $a_0 : F_0 \to E_0$ be this isomorphism.
Since $\mathcal{I}\subseteq\mathcal{J}$, we have the natural surjection $\pi : \Ok/\mathcal{I} \to \Ok/\mathcal{J}$,
and by the choice of $a_0$, the diagram
\begin{equation} \label{eqa0morphism}
\begin{gathered}
\xymatrix{
E_0 \ar[r] &\Ok/\mathcal{J} \\
F_0 \ar[u]^{a_0}  \ar[r] & \Ok/\mathcal{I} \ar[u]^{\pi}
}
\end{gathered}
\end{equation}
commutes. In fact, even when $(E,\varphi)$ and $(F,\psi)$ are not minimal, one can always
find $a_0$ making \eqref{eqa0morphism} commute, and we thus assume $a_0$ is chosen in this way.
Using the fact that the $F_k$ are free and that $(E,\varphi)$
is exact, by a simple diagram chase one can complete this
to a commutative diagram
\begin{equation} \label{eqamorphism}
\begin{gathered}
\xymatrix{
0 \ar[r] & E_N \ar[r]^{\varphi_N} & E_{N-1} \ar[r]^{\varphi_{N-1}}& \cdots \ar[r]^{\varphi_1} & E_0 \ar[r] &\Ok/\mathcal{J} \ar[r] & 0 \\
0 \ar[r] & F_N \ar[r]^{\psi_N} \ar[u]^{a_N}& F_{N-1}\ar[r]^{\psi_{N-1}} \ar[u]^{a_{N-1}} & \cdots \ar[r]^{\psi_1}& F_0 \ar[u]^{a_0}  \ar[r] & \Ok/\mathcal{I} \ar[u]^{\pi}\ar[r] & 0.
}
\end{gathered}
\end{equation}
The commutativity means that $a : (F,\psi) \to (E,\varphi)$ is a morphism of complexes,
cf., Proposition~\ref{propcomplexcomparison}.

The main result of this article is a comparison formula for the currents associated to $\mathcal{I}$
and $\mathcal{J}$ obtained from the morphism $a$. The formula involves forms $u^E$ and $u^F$,
which are certain endomorphism-valued forms on the free resolutions $E$ and $F$.
These forms are smooth outside of $Z(\mathcal{I}) \cup Z(\mathcal{J})$;
see Section~\ref{sectawcurrents} for details about how they are defined.
Throughout the article, $\chi(t) : \R_{\geq 0} \to \R_{\geq 0}$
is a smooth cut-off function such that $\chi(t) \equiv 0$ for $t \ll 1$ and $\chi(t) \equiv 1$
for $t \gg 1$.

\begin{thm} \label{thmRcomparisonIdeals}
    Let $\mathcal{I},\mathcal{J} \subseteq \Ok$ be two ideals such that $\mathcal{I} \subseteq \mathcal{J}$,
    and let $(E,\varphi)$ and $(F,\psi)$ be Hermitian resolutions of $\Ok/\mathcal{J}$ and $\Ok/\mathcal{I}$ respectively.
    Let $a : (F,\psi) \to (E,\varphi)$ be the morphism in \eqref{eqamorphism} induced by
    the natural surjection $\pi : \Ok/\mathcal{I} \to \Ok/\mathcal{J}$. Then,
    \begin{equation} \label{eqRcomparison}
        R^\mathcal{J} a_0 - a R^\mathcal{I} = \nabla_{\varphi} M,
    \end{equation}
    where $\nabla_{\varphi} = \sum \varphi_k - \dbar$, and
    \begin{equation*}
        M = \lim_{\epsilon\to 0^+} \dbar\chi(|h|^2/\epsilon) \wedge u^E a u^F,
    \end{equation*}
    where $h$ is a tuple of holomorphic functions such that $h \not\equiv 0$,
    and $\{ h = 0 \}$ contains $Z(\mathcal{I}) \cup Z(\mathcal{J})$.
\end{thm}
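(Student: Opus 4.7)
The natural approach is to realize $M$, like $R^E$ itself, as the analytic continuation at $\lambda = 0$ of a product of smooth forms, and to establish \eqref{eqRcomparison} by a super-Leibniz computation followed by analytic continuation. I would rely on the setup of Section~\ref{sectawcurrents}: outside the analytic set where $(E,\varphi)$ (resp.\ $(F,\psi)$) fails to be pointwise exact, the endomorphism-valued form $u^E$ (resp.\ $u^F$) is smooth and satisfies the pointwise identity $\nabla_\varphi u^E = I_E$ (resp.\ $\nabla_\psi u^F = I_F$), while after regularization the currents $U^E = |G|^{2\lambda} u^E|_{\lambda = 0}$ and $R^E = \dbar|G|^{2\lambda} \wedge u^E|_{\lambda = 0}$ satisfy $\nabla_\varphi U^E = I_E - R^E$ globally; the same holds for $(F,\psi)$.

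The heart of the argument is an algebraic identity of smooth forms on the complement of the common singular set. Let $\nabla$ denote the natural super-derivation on $\Hom(F,E)$-valued forms, acting by left multiplication by $\varphi$, by signed right multiplication by $\psi$, and by $-\dbar$. The condition that $a$ is a morphism of complexes, $\varphi a = a \psi$, is exactly $\nabla a = 0$, so the super-Leibniz rule together with $\nabla u^E = I_E$ and $\nabla u^F = I_F$ should collapse to a pointwise identity of the shape
\begin{equation*}
    \nabla(u^E \wedge a u^F) = a u^F \pm u^E a,
\end{equation*}
the sign being dictated by the parity of $\deg u^E$.

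The next step is to multiply by the regularizing factor $|G|^{2\lambda}$ and analytically continue to $\lambda = 0$. For $\mathrm{Re}\,\lambda \gg 0$ everything is smooth, and applying super-Leibniz gives
\begin{equation*}
    \nabla(|G|^{2\lambda} u^E \wedge a u^F) + \dbar|G|^{2\lambda} \wedge u^E \wedge a u^F = |G|^{2\lambda}(a u^F \pm u^E a).
\end{equation*}
Continuing to $\lambda = 0$, the second term on the left becomes $M$, the right-hand side becomes $a U^F \pm U^E a_0$, and the first term on the left becomes $\nabla_\varphi$ applied to the value at $\lambda = 0$ of $|G|^{2\lambda} u^E \wedge a u^F$. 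Rearranging and substituting $\nabla_\varphi U^E = I_E - R^E$ and $\nabla_\psi U^F = I_F - R^F$ to eliminate the $U$-terms then produces exactly $R^E a_0 - a R^F = \nabla_\varphi M$.

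The main obstacle is verifying the algebraic identity of the second paragraph with the correct sign conventions: one must check that on the complement of the singular set, the Leibniz expansion of $\nabla(u^E \wedge a u^F)$ really does collapse to only the two claimed terms, which hinges on $\nabla a = 0$ and on $\nabla u^E, \nabla u^F$ being identity sections concentrated in a single bidegree. Once this pointwise identity is in place, the existence of the analytic continuations and the final rearrangement are routine within the framework of \cite{AW1}.
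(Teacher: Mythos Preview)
Your proposal is correct and follows essentially the same route as the paper's proof of the general Theorem~\ref{thmRcomparison}: introduce the auxiliary current $M' := |G|^{2\lambda}\, u^E \wedge a\, u^F\big|_{\lambda=0}$, use $\nabla a = 0$ and the super-Leibniz rule to obtain $\nabla M' = -M + a\,U^F - U^E a$, and then apply $\nabla$ once more. Your final step is phrased a bit elliptically---the $U$-terms are not eliminated by direct substitution of $\nabla U = \Id - R$, but by applying $\nabla$ to both sides and using $\nabla^2 M' = 0$---but that is precisely what the paper does.
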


The theorem in fact holds in a more general setting. First of all, there are
Andersson-Wulcan currents associated not just to Hermitian resolutions, but to any generically
exact Hermitian complex. The theorem holds for such residue currents
together with arbitrary morphisms of the complexes, Theorem~\ref{thmRcomparison}.
In addition, the current $M$ is there interpreted as the so-called residue of
an almost semi-meromorphic current.
To elaborate more precisely how $M$ and $\nabla_\varphi$ are defined,
more background from the construction of the Andersson-Wulcan currents is required.
We refer to Section~\ref{sectawcurrents} for the necessary background,
and Section~\ref{sectcomparisonformula} for a more precise statement of the
comparison formula in the general form.

\subsection{A transformation law for Andersson-Wulcan currents associated with Cohen-Macaulay ideals} \label{ssect:intro-trans}

Our first application is a situation in which the current $M$ in \eqref{eqRcomparison} vanishes.
This gives a direct generalization of the transformation law for Coleff-Herrera
products to Andersson-Wulcan currents associated with Cohen-Macaulay ideals.
We recall that an ideal $\mathcal{J}$ is \emph{Cohen-Macaulay} if $\Ok/\mathcal{J}$ has a
free resolution of length equal to $\codim Z(\mathcal{J})$.

\begin{thm} \label{thmtranscm}
    Let $\mathcal{I},\mathcal{J} \subseteq \Ok$ be two Cohen-Macaulay ideals
    of the same codimension $p$ such that $\mathcal{I} \subseteq \mathcal{J}$.
    Let $(F,\psi)$ and $(E,\varphi)$ be
    Hermitian resolutions of length $p$ of $\Ok/\mathcal{I}$ and $\Ok/\mathcal{J}$ respectively.
    If $a : (F,\psi) \to (E,\varphi)$ is the morphism in \eqref{eqamorphism}
    induced by the natural surjection $\pi : \Ok/\mathcal{I} \to \Ok/\mathcal{J}$, then
    \begin{equation*}
        R_p^\mathcal{J} a_0 = a_p R_p^\mathcal{I}.
    \end{equation*}
\end{thm}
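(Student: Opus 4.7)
The approach is to apply the comparison formula of Theorem~\ref{thmRcomparisonIdeals} and show that in the Cohen--Macaulay setting the correction term $\nabla_\varphi M$ vanishes in the only bidegree that matters. Both $(E,\varphi)$ and $(F,\psi)$ have length equal to the codimension $p$, so the Andersson--Wulcan decompositions $R^E = \sum_{k=p}^N R^E_k$ and $R^F = \sum_{k=p}^N R^F_k$ collapse to their top components, giving $R^\mathcal{J} = R^\mathcal{J}_p$ and $R^\mathcal{I} = R^\mathcal{I}_p$. The left hand side of~\eqref{eqRcomparison} is therefore the single $\Hom(F_0, E_p)$-valued $(0,p)$-current $R^\mathcal{J}_p a_0 - a_p R^\mathcal{I}_p$.

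Writing $M = \sum_k M_k$ according to the $\Hom(F_0, E_k)$-valued decomposition, the $\Hom(F_0, E_p)$-valued $(0,p)$-current part of $\nabla_\varphi M$ equals $\varphi_{p+1} M_{p+1} - \dbar M_p$. The first term vanishes because $E_{p+1} = 0$, so~\eqref{eqRcomparison} reduces to
\begin{equation*}
R^\mathcal{J}_p a_0 - a_p R^\mathcal{I}_p = -\dbar M_p^{(0,p-1)},
\end{equation*}
where $M_p^{(0,p-1)}$ denotes the bidegree $(0,p-1)$ part of $M_p$, since this is the only component of $M_p$ whose $\dbar$ contributes in bidegree $(0,p)$.

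To conclude, I would invoke the dimension principle for pseudomeromorphic currents. Since $\mathcal{I} \subseteq \mathcal{J}$ implies $Z(\mathcal{J}) \subseteq Z(\mathcal{I})$, one can choose $G$ so that $\{G = 0\} = Z(\mathcal{I})$; then $M$ is a pseudomeromorphic current supported in $Z(\mathcal{I})$, which has codimension exactly $p$ by the Cohen--Macaulay hypothesis on $\mathcal{I}$. Hence $M_p^{(0,p-1)}$ is a pseudomeromorphic $(0,p-1)$-current supported in a subvariety of codimension $p$, and so by the dimension principle $M_p^{(0,p-1)} = 0$. It follows that $R^\mathcal{J}_p a_0 = a_p R^\mathcal{I}_p$.

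The main technical obstacle I anticipate is the bidegree bookkeeping for $M$, together with verifying that the relevant component is pseudomeromorphic; this relies on the explicit structure of the Andersson--Wulcan forms $u^E$ and $u^F$ from Section~\ref{sectawcurrents}, and in particular on checking that the decomposition of $M$ interacts with $\nabla_\varphi$ as claimed above. Granted this framework, the proof reduces to an essentially immediate application of the dimension principle.
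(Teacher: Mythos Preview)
Your proof is correct and follows essentially the same approach as the paper: apply the comparison formula and then kill the $M$ term via the dimension principle for pseudomeromorphic currents. The paper streamlines the argument slightly by observing that \emph{every} component $M_{l,k}$ of $M$ (with values in $\Hom(F_l,E_k)$) is a pseudomeromorphic $(0,k-l-1)$-current supported on a set of codimension $p$, and since $k\le p$ forces $k-l-1<p$, the entire current $M$ vanishes---so the bidegree bookkeeping for $\nabla_\varphi M$ that you carry out is not needed.
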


The proof of Theorem~\ref{thmtranscm} is given in Section~\ref{secttranscm};
it is a special case of the more general Theorem~\ref{thmtranscmgen}.
In Remark~\ref{remtransl} in Section~\ref{secttranscm}, we describe how
the transformation law for Coleff-Herrera products is a special case of
Theorem~\ref{thmtranscm}.

In the article \cite{DS2} two proofs of the transformation law for Coleff-Herrera
products are given. One of the proofs can in fact be adapted to give an alternative
proof of Theorem~\ref{thmtranscm}, see Section~\ref{secttranscm}.

See Section~\ref{secttranscm} for various examples of how one can use
Theorem~\ref{thmtranscm} or its generalization Theorem~\ref{thmtranscmgen}
to express the current $R^\mathcal{I}$ for a Cohen-Macaulay ideal $\mathcal{I}$
in terms of other currents in an explicit way.
This type of expressions were used by Lejeune-Jalabert in \cite{LJ2} to create certain 
cohomological residues for Cohen-Macaulay ideals in terms of Grothendieck residues.
She used this type of residues to express the fundamental cycle of such ideals in terms
of Grothendieck residues. However, duality properties of such cohomological residues were not investigated.
Lundqvist, \cites{Lund1,Lund2}, also constructed cohomological residues associated to pure dimensional ideals,
and proved that they satisfy a duality theorem.
With the help of the comparison formula, we elaborate in \cite{LarG} a bit on the relation between such residues,
and the relation with Andersson-Wulcan currents.
The comparison formula also plays an important role in that article, as it is used to prove
functoriality for a pairing defined with the help of Andersson-Wulcan currents.

In Section~\ref{sectnoncmex} we give an example of a computation when the ideal
is not Cohen-Macaulay.

In the joint article \cite{LW2} with Wulcan we use Theorem~\ref{thmtranscm} to
express explicitly the fundamental cycle of a pure dimensional ideal in terms
of residue currents, generalizing the Poincar\'e-Lelong formula. This is related
to the construction of Lejeune-Jalabert mentioned above. In another joint article,
\cite{LW}, we use Theorem~\ref{thmtranscm} to calculate in a simpler and in some aspects more explicit
way residue currents associated to Artinian monomial ideals, compared to earlier
work by Wulcan. Having such explicit expression for the currents, we were able to directly
prove the results from \cite{LW2} for such ideals.

\subsection{The Jacobian determinant of a holomorphic mapping}

Let \linebreak $f = (f_1,\dots,f_m) \in \Ok^{\oplus m}$.
Let $\mathcal{J}ac(f)$ be the ideal generated by the coefficients of $df_1 \wedge \dots \wedge df_m$, i.e., if
\begin{equation*}
    df_1\wedge\dots\wedge df_m = \sum_{|I|=m} f_I dz_{i_1} \wedge \dots \wedge dz_{i_m},
\end{equation*}
then $\mathcal{J}ac(f)$ is the ideal generated by all the $f_I$'s.

We give an analytic proof of the following (slightly weaker variant of a) theorem of Vasconcelos, \cite{Vasc}*{Theorem~(2.4)},
using the generalization Theorem~\ref{thmRcomparison} of Theorem~\ref{thmRcomparisonIdeals}.
In \cite{Vasc} this theorem was proved for the polynomial ring over a field.
In \cite{Wi} Wiebe proved this theorem (formulated slightly differently) in the case $m = n$
for any local ring.
We recall that if $I$ and $J$ are ideals in a ring $R$, then the ideal quotient $I : J$ is the ideal
\begin{equation*}
    I : J := \{ r \in R \mid rJ \subseteq I \}.
\end{equation*}

\begin{thm} \label{thm:vasc}
    Let $f = (f_1,\dots,f_m)$ be a tuple of holomorphic functions in $\Ok$ vanishing at $\{ 0 \}$,
    and assume that $\Ok/\mathcal{J}(f)$ has a free resolution of length $\leq m$.
    Let $\mathcal{J}_m(f)$ be the ideal of all holomorphic functions vanishing at all irreducible
    irreducible components of $Z(f)$ of codimension $m$.
    Then,
    \begin{equation*}
        \mathcal{J}_m(f) = \mathcal{J}(f) : \mathcal{J}ac(f).
    \end{equation*}
\end{thm}

Note that if $\mathcal{I}$ and $\mathcal{J}$ are ideals in $\Ok$,
then $\mathcal{J} : \mathcal{I} = \Ok$ if and only if $\mathcal{I} \subseteq \mathcal{J}$,
and that $\mathcal{J}_m(f) = \Ok$ if and only if $Z(f)$ has no irreducible components of codimension $m$.
Combining these two remarks with the theorem, one gets that \emph{$\mathcal{J}ac(f) \subseteq \mathcal{J}(f)$ if and only if 
$Z(f)$ has no irreducible component of codimension $m$} (under the assumption that $\Ok/\mathcal{J}(f)$
has a free resolution of length $\leq m$).

Note that if $f = (f_1,\dots,f_n)$, then $\mathcal{J}ac(f)$ is generated by the Jacobian determinant $J_f$ of $f$.
Moreover, by the Hilbert syzygy theorem, $\Ok/\mathcal{J}(f_1,\dots,f_m)$ always has a free resolution of length $n$.
Finally, if $f=(f_1,\dots,f_n)$ vanishes at $0$, then $\codim Z(f) = n$ if and only if
$Z(f)$ has an irreducible component of codimension $n$.
Thus, we have the following corollary of Theorem~\ref{thm:vasc}, which was proven by Hickel, \cite{Hic},
in the analytic setting. It is not too hard to show that this is in fact equivalent to Theorem~\ref{thm:vasc} when $m = n$.

\begin{cor} \label{corhickel}
    Let $f = (f_1,\dots,f_n)$ be a tuple of germs of holomorphic functions in $\Ok_{\Cn,0}$ vanishing at $\{0\}$, and let
    $J_f$ be the Jacobian determinant of $f$. 
    Then $J_f \in \mathcal{J}(f_1,\dots,f_n)$ if and only if $\codim Z(f_1,\dots,f_n) < n$.
    In addition, if \linebreak $\codim Z(f_1,\dots,f_n) = n$, then $\mathfrak{m} J_f \subseteq \mathcal{J}(f_1,\dots,f_n)$.
\end{cor}

We will use the generalization Theorem~\ref{thmRcomparison} of Theorem~\ref{thmRcomparisonIdeals}
to give a proof of this theorem by means of residue currents, the proof is given in
Section~\ref{secthickel}.

The results in \cite{Hic} concern more general rings than just $\Ok = \Ok_{\Cn,0}$, the ring
of germs of holomorphic functions.
In the proof in \cite{Hic}, as is the case here, residues are used. However,
the proof in \cite{Hic} uses Lipman residues, which are very much algebraic
in nature, compared to Andersson-Wulcan currents, which are analytic in nature.

In the other applications of our comparison formula that we consider in the introduction
we consider Andersson-Wulcan currents associated to Hermitian resolutions.
In the proof of Theorem~\ref{thm:vasc} we use the comparison formula when the
source complex is the Koszul complex of $f$, which is generically exact, and exact
if and only if $f$ is a complete intersection.
The target complex is a free resolution of the ideal
$\mathcal{J}(f)$, and in order to get the induced morphism between the complexes, it
is only required that the target complex is exact, see Proposition~\ref{propcomplexcomparison}.

The current associated to the Koszul complex of $f$ is called the Bochner-Martinelli current, as
introduced in \cite{PTY}. In fact, Corollary~\ref{corhickel} was an important tool in the study
of annihilators of Bochner-Martinelli currents in \cite{JW}.

\subsection{Residue currents with prescribed annihilator ideals on analytic varieties} \label{ssect:intro-prescribed}

One of the main applications when constructing the comparison formula was to construct
residue currents with prescribed annihilator ideals on singular varieties,
generalizing the construction of Andersson-Wulcan. Let $\mathcal{J} \subseteq \Ok_Z$ be an ideal
on an analytic variety $Z \subseteq \C^n$. If one considers the maximal lifting $\mathcal{J} + \mathcal{I}_Z$
of $\mathcal{J}$ to an ideal in $\Ok_{\C^n}$, then the Andersson-Wulcan current $R^{\mathcal{J}+\mathcal{I}_Z} \wedge dz$
is a current on $\C^n$ whose annihilator is $\mathcal{J}+\mathcal{I}_Z$. Since the annihilator
contains $\mathcal{I}_Z$, this current is annihilated by all holomorphic functions vanishing at $Z$, and one gets a well-defined
multiplication of this current with $\Ok_Z$. Since the annihilator as a $\Ok_{\C^n}$-module is $\mathcal{J} + \mathcal{I}_Z$,
its annihilator as a $\Ok_Z$-module is $\mathcal{J}$. We have thus constructed a current with a prescribed annihilator
on a singular subvariety of $\C^n$.
A priori, this current is just a current on $\C^n$. It would be more satisfactory
that it defines an intrinsic current on $Z$, which means that it is annihilated by all smooth forms vanishing on $Z$.
This is indeed the case, and in \cite{Lar4} we prove this using the comparison formula, give this construction a
more intrinsic interpretation, and show that this construction indeed generalizes the construction of Andersson-Wulcan when
the variety is smooth.

Trying to prove that $R^{\mathcal{J} + \mathcal{I}_Z} \wedge dz$ is a current on $Z$ was actually how we were lead
to discover the comparison formula. To prove that $R^{\mathcal{I}_Z}\wedge dz$ corresponds to a current on $Z$
is rather straightforward, using properties of pseudomeromorphic currents if $Z$ has pure dimension.
Since the holomorphic annihilator of $R^{\mathcal{J}+\mathcal{I}_Z}$ is larger than
that of $R^{\mathcal{I}_Z}$, and it has smaller support, it should be easier to annihilate
it, and hence $R^{\mathcal{J}+\mathcal{I}_Z}\wedge dz$ should also correspond to a current on $Z$.
One way of making this into a formal mathematical argument would be to express
$R^{\mathcal{J}+\mathcal{I}_Z}$ in terms of $R^{\mathcal{I}_Z}$.
In the case of two complete intersections $f$ and $g$ instead of $\mathcal{J}+\mathcal{I}_Z$
and $\mathcal{I}_Z$, the transformation law expresses this relation.
Trying to extend this to more general ideals, we arrived at Theorem~\ref{thmRcomparisonIdeals}.

More precisely, by Theorem~\ref{thmRcomparisonIdeals}, we can write
\begin{equation} \label{eq:RJI}
    R^{\mathcal{J}+\mathcal{I}_Z} \wedge dz = a R^{\mathcal{I}_Z} \wedge dz + \nabla M \wedge dz,
\end{equation}
and it thus remains to prove that $\nabla M \wedge dz$ is annihilated by any smooth form vanishing on $Z$.
This can be proven by induction, reducing to the fact that $a R^{\mathcal{I}_Z} \wedge dz$ is a current on $Z$.
In fact, in \cite{Lar4} we prove something stronger, namely, we express \eqref{eq:RJI} as the push-forward of the current
\begin{equation*}
    a \omega_Z + \nabla (V^E \wedge \omega_Z)
\end{equation*}
on $Z$, where $V^E$ and $\omega_Z$ are explicit almost semi-meromorphic currents on $Z$.

\section*{Acknowledgements}

I would like to thank Mats Andersson and Elizabeth Wulcan for valuable
discussions in the preparation of this article.

\section{Andersson-Wulcan currents and pseudomeromorphic currents}\label{sectawcurrents}

In this section we recall the construction of residue currents associated to
Hermitian resolutions of ideals, or more generally, residue currents associated to
generically exact Hermitian complexes, as constructed in \cite{AW1} and \cite{AndInt2}.
This is done in a rather detailed manner, since in order to prove the
comparison formula and the properties of the currents appearing in the formula,
we require rather detailed knowledge of the construction of Andersson-Wulcan
currents and their properties.

Let $(E,\varphi)$ be a \emph{Hermitian complex} (i.e., a complex of free $\Ok$-modules,
such that the corresponding vector bundles are equipped with Hermitian metrics),
which is generically exact, i.e., the complex is pointwise exact outside some analytic set $Z$ of positive codimension.
Mainly, $(E,\varphi)$ will be a free resolution of a module $\Ok/\mathcal{J}$,
for some ideal $\mathcal{J} \subseteq \Ok$.
When we refer to exactness of the complex, we mean that the induced complex
of sheaves of $\Ok$-modules is exact. When we refer to exactness as vector bundles,
we will refer to it as pointwise exactness. This is in contrast to the notation
in for example \cite{AW1} where the induced complex of sheaves of $\Ok$-modules is
denoted $\Ok(E)$, and exactness as vector bundles or sheaves depends on if the complex
is referred to as $E$ or $\Ok(E)$.

\subsection{The superbundle structure of the total bundle $E$} \label{ssectsuper}

The bundle $E = \oplus E_k$ has a natural superbundle structure, i.e., a $\Z_2$-grading,
which splits $E$ into odd and even elements $E^+$ and $E^-$, where $E^+ = \oplus E_{2k}$
and $E^- = \oplus E_{2k+1}$. Then $\D'(E)$, the sheaf of current-valued sections
of $E$, inherits a superbundle structure by letting the degree of an element $\mu \otimes \omega$
be the sum of the degrees of $\mu$ and $\omega$ modulo 2, where $\mu$ is a current and $\omega$
is a section of $E$.

The bundle $\End E$ also inherits a superbundle structure by letting the even elements be
the endomorphisms preserving the degree, and the odd elements the endomorphisms
switching the degree.
Given $g$ in $\End E$, we consider it also as an element of $\End \D'(E)$
by the formula
\begin{equation*}
    g (\mu \otimes \omega) = (-1)^{(\deg g)(\deg \mu)}\mu\otimes g\omega
\end{equation*}
if $g$ is homogeneous.
We also consider $\dbar$ as acting on $\D'(E)$ by the
formula $\dbar (\mu\otimes\omega) = \dbar \mu \otimes \omega$ if $\omega$ is a holomorphic
section of $E$.

We let $\nabla := \varphi - \dbar$. Note that the action of $\varphi$ on $\D'(E)$
is defined so that $\dbar$ and $\varphi$ anti-commute, and hence $\nabla^2 = 0$.
Note also that since $\varphi$ and $\dbar$ are odd, $\nabla$ is odd.

The $\Ok$-morphism $\nabla$ induces an $\Ok$-morphism $\nabla_\End$ on $\D'(\End E)$ by the formula
\begin{equation} \label{eqnablaenddef}
    \nabla(\alpha \xi) = \nabla_\End(\alpha) \xi + (-1)^{\deg \alpha} \alpha \nabla \xi,
\end{equation}
where $\alpha$ is a section of $\D'(\End E)$ and $\xi$ is a section of $E$.
By the fact that $\nabla^2 = 0$, and that $\nabla$ is odd, we also get that $\nabla_\End^2 = 0$.
Note also that if $\alpha$ and $\beta$ are sections of $\D'(\End E)$ of which at least
one of them is smooth, so that $\alpha \beta$ is defined, then
\begin{equation} \label{eqnablaleibniz}
    \nabla_\End(\alpha \beta) = \nabla_\End(\alpha) \beta + (-1)^{\deg \alpha} \alpha \nabla_\End \beta.
\end{equation}

\subsection{Pseudomeromorphic currents}

Many arguments regarding Andersson-Wulcan currents use the fact that they
are pseudomeromorphic. Pseudomeromorphic currents were introduced in \cite{AW2},
based on similarities in the construction of Andersson-Wulcan currents and
Coleff-Herrera products.

A current of the form
\begin{equation*}
        \frac{1}{z_{i_1}^{n_1}}\cdots\frac{1}{z_{i_k}^{n_k}}\dbar\frac{1}{z_{i_{k+1}}^{n_{k+1}}}\wedge\cdots\wedge\dbar\frac{1}{z_{i_m}^{n_m}}\wedge \alpha
\end{equation*}
in some local coordinate system $z$, where $\alpha$ is a smooth form with compact support, is said to be an
\emph{elementary current}. A current on a complex manifold $X$ is said to be
\emph{pseudomeromorphic}, denoted $T \in \PM(X)$, if it can be written as a locally
finite sum of push-forwards of elementary currents under compositions of modifications
and open inclusions.
As can be seen from the construction,
Coleff-Herrera products, Andersson-Wulcan currents and all currents appearing in this
article are pseudomeromorphic. In addition, as is apparent from the definition, the class
of pseudomeromorphic currents is closed under push-forwards of currents under modifications
and under multiplication by smooth forms.

An important property of pseudomeromorphic currents is that they satisfy the following
\emph{dimension principle},  \cite{AW2}*{Corollary~2.4}.
\begin{prop} \label{proppmdim}
    If $T \in \PM(X)$ is a $(p,q)$-current with support on a variety $Z$,
    and $\codim Z > q$, then $T = 0$.
\end{prop}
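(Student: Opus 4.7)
The plan is to reduce the statement, by the very definition of pseudomeromorphicity, to a single elementary current on a smooth manifold, and then exploit the fact that the support of such a current has codimension bounded above by its antiholomorphic degree. Since $T \in \PM(X)$ is locally a finite sum of pushforwards $\pi_* \tau_j$, where each $\pi_j$ is a composition of modifications and open inclusions and each $\tau_j$ is elementary, and since multiplication by smooth cutoffs and pushforward under modifications preserve the class $\PM$, the statement is essentially local and compatible with passing to each summand after a careful localization step.

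The key observation about an elementary current
\[
    \tau = \frac{1}{z_{i_1}^{n_1}}\cdots\frac{1}{z_{i_k}^{n_k}}\dbar\frac{1}{z_{i_{k+1}}^{n_{k+1}}}\wedge\cdots\wedge\dbar\frac{1}{z_{i_m}^{n_m}}\wedge \alpha
\]
is that $\supp \tau \subseteq \{z_{i_{k+1}} = \cdots = z_{i_m} = 0\} \cap \supp \alpha$, whose codimension in the ambient manifold is exactly $m-k$, while the antiholomorphic degree coming from the $\dbar$-factors is $m-k$. Hence for an elementary current of total antiholomorphic degree $q$, its support has codimension at most $q$. Pushing forward under a modification $\pi$ (which is proper), one has $\supp \pi_* \tau \subseteq \pi(\supp \tau)$, an analytic set of codimension at most $q$ in $X$, preserving the bidegree.

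The heart of the argument is then: if $T$ is supported in a variety $Z$ with $\codim Z > q$, I would cut off with a smooth function $\chi_\epsilon$ supported in a neighborhood of $Z$ that shrinks to $Z$. Lifting $\chi_\epsilon \circ \pi_j$ to each resolution, each contribution $\pi_{j,*}(\chi_\epsilon \circ \pi_j \cdot \tau_j)$ is still PM, and in the limit $\epsilon \to 0$ the smooth factor concentrates on the preimage of $Z$, which generically misses $\supp \tau_j$ (as $\pi_j(\supp \tau_j)$ has codimension at most $q < \codim Z$). By an analytic continuation / residue calculation in local coordinates on the resolution, each such term vanishes, and hence so does $T$.

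The main obstacle I anticipate is controlling cancellations between the different summands $\pi_{j,*} \tau_j$: a priori individual terms may have supports of codimension strictly less than $\codim Z$, and only the sum is constrained to lie on $Z$. To bypass this, I would argue by induction on the dimension of the smallest analytic set containing $\supp T$, or equivalently by working at a generic smooth point of each irreducible component of $\bigcup_j \pi_j(\supp \tau_j)$ that is not contained in $Z$ and using that, at such a point, $T$ is forced to be zero by the support hypothesis, before iterating. This reduces the problem to the vanishing of elementary currents on coordinate subvarieties of codimension exceeding their antiholomorphic degree, which is immediate from the explicit form of $\tau$.
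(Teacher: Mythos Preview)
The paper does not supply its own proof of this proposition; it is quoted as Corollary~2.4 in \cite{AW2}. So there is nothing here to compare against, and your proposal has to stand on its own.

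Your diagnosis of the difficulty is right: a single elementary current of bidegree $(p,q)$ has support of codimension at most $q$, so the only obstacle is cancellation among the summands $\pi_{j,*}\tau_j$. But the proposed cure has a genuine gap. Since $\supp T\subseteq Z$, one has $\chi_\epsilon T = T$ for every small $\epsilon$, so letting $\epsilon\to 0$ in $T=\sum_j\pi_{j,*}\bigl((\chi_\epsilon\circ\pi_j)\tau_j\bigr)$ yields nothing unless each term is controlled separately. You then assert that ``each such term vanishes'', but this is not what a smooth cutoff produces and is false in general: nothing prevents $\pi_j^{-1}(Z)$ from meeting or even containing $\supp\tau_j$ (for instance $\pi_j^{-1}(Z)$ may well contain codimension-one exceptional components), and in that case the limit, if it exists at all, is a nonzero restriction of $\tau_j$. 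The induction sketched in your last paragraph does not repair this: knowing $T=0$ off $Z$ is the hypothesis, and it constrains neither the individual $\tau_j$ nor the behaviour of $T$ along $Z$.

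The device that actually resolves the cancellation, used in \cite{AW2}, is the restriction operator $\mathbf{1}_V T := T - |h|^{2\lambda}T\big|_{\lambda=0}$ recalled in this paper just after Proposition~\ref{proppmantiholo}. Because it is defined by the same analytic-continuation procedure as the elementary currents, it is linear, commutes with pushforward under modifications, and can be evaluated explicitly on each elementary piece after a further resolution making $h\circ\pi_j$ a monomial. One checks that $\mathbf{1}_{\{w_\ell=0\}}$ annihilates an elementary current unless $w_\ell$ occurs among its residue factors $\dbar(1/w_\ell^{n})$, in which case it acts as the identity. From $\supp T\subseteq Z$ one gets $T=\mathbf{1}_Z T$, and since $\codim Z>q$ one can pass through more independent hypersurfaces than there are residue factors in any $(p,q)$-elementary current, so every contribution is eventually killed. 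The linearity of $\mathbf{1}_Z$ is exactly what makes the term-by-term computation legitimate despite cancellations in the original decomposition; replacing your smooth cutoff $\chi_\epsilon$ by $1-|h|^{2\lambda}$ and your limit $\epsilon\to 0$ by analytic continuation to $\lambda=0$ would put your outline on the right track.
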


Another important property is the following, \cite{AW2}*{Proposition~2.3}.
\begin{prop} \label{proppmantiholo}
    If $T \in \PM(X)$, and $\Psi$ is a holomorphic form vanishing on $\supp T$,
    then
    \begin{equation*}
        \overline{\Psi} \wedge T = 0.
    \end{equation*}
\end{prop}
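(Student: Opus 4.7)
My plan is to reduce the statement to the case of a single elementary current via the structural definition of pseudomeromorphic currents, and then to verify the elementary case by Taylor expansion combined with a key annihilation identity for the residue factors.

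Locally on $X$, the current $T$ is by definition a finite sum $\sum_j (\pi_j)_*\tau_j$ of push-forwards of elementary currents $\tau_j$ under compositions of modifications and open inclusions. Since the claim is linear in $T$, it suffices to treat a single term $T = \pi_*\tau$. If $\Psi$ vanishes on $\supp T$, then the pull-back $\pi^*\Psi$ is holomorphic on the source and vanishes on $\pi^{-1}(\supp T) \supseteq \supp\tau$. Since wedging with the smooth form $\bar\Psi$ passes through the push-forward by the projection formula,
\begin{equation*}
\bar\Psi \wedge \pi_*\tau = \pi_*\bigl(\overline{\pi^*\Psi}\wedge\tau\bigr),
\end{equation*}
so the problem reduces to showing $\bar\Phi\wedge\tau = 0$ for an elementary current $\tau$ and a holomorphic form $\Phi$ vanishing on $\supp\tau$.

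For the elementary case, the support of $\tau$ lies in the coordinate subspace $V = \{z_{i_{k+1}} = \cdots = z_{i_m} = 0\}$. The key identity is $\bar z_j \wedge \dbar(1/z_j^{n_j}) = 0$ for $n_j \geq 1$: writing $\dbar(1/z_j^{n_j})$ as a constant multiple of $\partial_{z_j}^{n_j-1}\delta_0\wedge d\bar z_j$, using that $\partial_{z_j}$ commutes with multiplication by $\bar z_j$ (since $\partial_{z_j}\bar z_j = 0$), and then $\bar z_j\delta_0 = 0$, the claim follows. Taylor-expanding $\Phi$ in the transverse variables to $V$,
\begin{equation*}
\Phi = \Phi_0 + \sum_{j=k+1}^m z_{i_j}\Phi_j,
\end{equation*}
where $\Phi_0$ is the restriction of $\Phi$ to $V$, the annihilation identity yields $\bar\Phi\wedge\tau = \bar\Phi_0\wedge\tau$, reducing the problem to the case where the form depends only on the tangential coordinates.

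The main difficulty is this final step: concluding $\bar\Phi_0\wedge\tau = 0$. The natural argument is that whenever $\tau$ is non-zero near a point of $V$, the support $\supp\tau$ contains an open subset of $V$ there, because the explicit integration-by-parts formula expresses $\tau$ as the evaluation of transverse derivatives of $\alpha$ along $V$, producing a current with support equal to $V\cap\overline{\supp\alpha'}$ for some well-chosen Taylor coefficient $\alpha'$ of $\alpha$. A holomorphic $\Phi_0$ vanishing on such an open subset of $V$ must vanish identically on the relevant connected component by the identity theorem, whence $\bar\Phi_0\wedge\tau = 0$. Making the intuition ``$\supp\tau$ has non-empty interior in $V$ wherever $\tau\ne 0$'' into a rigorous assertion is the technical crux; it requires a careful case analysis of the decomposition of $\alpha$ by transverse bidegree types in $dz_{i_j}$ and $d\bar z_{i_j}$, together with a partition-of-unity argument localizing to points where a definite transverse Taylor coefficient of $\alpha$ is non-vanishing.
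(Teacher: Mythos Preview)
The paper does not give its own proof of this proposition; it is quoted from \cite{AW2}, Proposition~2.3, so there is no in-paper argument to compare against. That said, your proposal has a genuine gap that needs to be addressed.

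The problematic step is the sentence ``Since the claim is linear in $T$, it suffices to treat a single term $T=\pi_*\tau$.'' The statement you are proving is not linear in the relevant sense: the hypothesis ``$\Psi$ vanishes on $\supp T$'' does not pass to the summands. If $T=\sum_j(\pi_j)_*\tau_j$, then $\supp T$ can be strictly smaller than $\bigcup_j\supp\bigl((\pi_j)_*\tau_j\bigr)$ because of cancellation, so $\Psi$ need not vanish on the support of any individual $(\pi_j)_*\tau_j$, and your inclusion $\pi^{-1}(\supp T)\supseteq\supp\tau$ is unjustified. Consequently the reduction to a single elementary current, as written, fails.

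The standard repair (and essentially what is done in \cite{AW2}) is to pass through the restriction operator. Writing $\Psi$ in components reduces to the case of a single holomorphic function $\psi$; since $\supp T\subseteq Z(\psi)$ one has $T=\mathbf{1}_{Z(\psi)}T$. It then suffices to prove the \emph{unconditional} identity $\bar\psi\,\mathbf{1}_{Z(\psi)}S=0$ for every pseudomeromorphic $S$, which \emph{is} genuinely linear in $S$ and is compatible with push-forward, so the reduction to elementary currents becomes legitimate. After a further resolution one may take $\psi$ to be a monomial in the local coordinates, and then your elementary identity $\bar z_j\,\dbar(1/z_j^{n_j})=0$ finishes the argument directly, without the delicate ``$\supp\tau$ has non-empty interior in $V$'' analysis you flag as the crux. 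In short, the missing idea is to trade the support hypothesis for the operator $\mathbf{1}_{Z(\psi)}$ before decomposing.
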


Pseudomeromorphic currents also have natural restrictions to analytic subvarieties.
If $T \in \PM(X)$, $Z \subseteq X$ is a subvariety of $X$, and $h$ is a tuple of
holomorphic functions such that $Z = Z(h)$, one can define
\begin{equation*}
    {\bf 1}_{X\setminus Z} T := \lim_{\epsilon\to 0^+} \chi(|h|^2/\epsilon) T \text{ and } {\bf 1}_Z T := T - {\bf 1}_{X\setminus Z} T.
\end{equation*}
This definition is independent of the choice of tuple $h$, and ${\bf 1}_Z T$ is a pseudomeromorphic
current with support on $Z$.

\subsection{Almost semi-meromorphic currents} \label{ssectas}

Let $f$ be a holomorphic function on $X$, or, more generally, a holomorphic section of a line bundle over $X$.
The associated \emph{principal value current} $1/f$ can be defined, e.g., as the limit 
\begin{equation*}
\lim_{\epsilon\to 0^+}\chi(|f|^2/\epsilon)\frac{1}{f}, 
\end{equation*}
where as before, $\chi$ is a smooth cut-off function.

A \emph{semi-meromorphic current} is a current of the form $\omega/f$ where $\omega$ is a smooth form.
Following \cite{AS2}, we say that a (pseudomeromorphic) current $A$ is \emph{almost semi-meromorphic}, $A\in ASM(X)$, 
if there is a modification $\pi:X'\to X$ such that $A=\pi_*(\omega/f)$ where $f$ is a holomorphic section of a line
bundle $L\to X'$ that does not vanish identically on $X'$ and $\omega$ is a smooth form with values in $L$.

By the dimension principle, a semi-meromorphic current has the SEP, and it then follows that almost
semi-meromorphic currents have the SEP as well. In particular, if a smooth form $\alpha$, a priori
defined outside a subvariety $W\subset X$, has an extension as a current $A\in ASM(X)$, then $A$ is
unique. Moreover, $A=\lim_{\epsilon\to 0^+} \chi (|h|^2/\epsilon) \alpha$, where $h \not\equiv 0$ is any tuple of
holomorphic functions that vanishes on $W$. We will sometimes be sloppy and use the same notation
for the smooth form $\alpha$ and its extension. 

It follows from the definition that $A\in ASM(X)$ is smooth outside a
proper subvariety of $X$. Following \cite{AWPM2}, 
we let the \emph{Zariski singular support} of $a$ be the smallest
Zariski-closed set $W$ such that $A$ is smooth outside $W$. 
If $A, B\in ASM(X)$, there is a unique current $A\wedge B\in ASM(X)$
that coincides with the smooth form $A\wedge B$ outside the Zariski singular
supports of $A$ and $B$.

Assume that $A\in ASM(X)$ has Zariski singular support $W$. Then one can write 
\begin{equation*}
\dbar A = B + R(A),
\end{equation*}
where $B={\bf 1}_{X\setminus W} \dbar A$ is the almost semi-meromorphic
continuation of $\dbar A$, and $R(A)={\bf 1}_W\dbar A$ is the
\emph{residue} of $A$, see \cite{AWPM2}*{Section~4.1}. Note that
$\dbar(1/f)=R(1/f)$.  
If $A$ is the principal value current
$A=\lim_{\epsilon\to 0^+}\chi(|h|^2/\epsilon)\alpha$, then 
$R(A)=\lim_{\epsilon\to 0^+}\dbar\chi(|h|^2/\epsilon)\wedge \alpha$. 
We also notice that if $\omega$ is smooth, then 
\begin{equation}\label{eq:Romega}
R(\omega\wedge A)=(-1)^{\deg \omega} \omega \wedge R(A). 
\end{equation}

If $(E,\varphi)$ is a complex of free $\Ok$-modules, and 
$A$ and $B$ are almost semi-meromorphic $\End(E)$-valued currents such that
$\nabla_{\End} A = B$ where $A$ and $B$ are smooth, then
\begin{equation} \label{eq:Rnabla}
    R(A) = B - \nabla_{\End} A,
\end{equation}
which follows since $\dbar A = \varphi_{\End} A - B$ where $A$ and $B$ are smooth,
and $\varphi_{\End} A - B$ has an extension as a semi-meromorphic current,
so $R(A) = \dbar A - (\varphi_{\End} A - B)$, which gives \eqref{eq:Rnabla}.

\subsection{The residue current $R$ associated to a generically exact Hermitian complex} \label{ssectrescur}

Let $Z$ be the set where $(E,\varphi)$ is not pointwise exact. Outside of $Z$,
let $\sigma_k^E : E_{k-1} \to E_k$ be the right-inverse to $\varphi_k$
which is minimal with respect to the metrics on $E$,
i.e., $\varphi_k \sigma_k^E|_{\Im \varphi_k} = \Id_{\Im \varphi_k}$, $\sigma_k^E = 0$ on
$(\Im \varphi_k)^\perp$, and $\Im \sigma_k^E \perp \ker \varphi_k$.
Then,
\begin{equation} \label{eqfsigmasigmaf}
    \varphi_{k+1}\sigma_{k+1}^E + \sigma_k^E \varphi_k = \Id_{E_k}.
\end{equation}
From \cite{AW1} it follows that if $\sigma^E := \sum \sigma_k^E$, then
\begin{equation*}
    u^E := \sum_{k=1}^N \sigma^E(\dbar\sigma^E)^{k-1}
\end{equation*}
has an extension $U^E$ as a current in $ASM(X)$.
From \eqref{eqfsigmasigmaf} it follows that $\nabla_{\End} u^E = \Id_E$ outside of $Z$.
The residue current $R^E$ can then be defined as the residue of $U^E$,
\begin{equation*}
    R^E := R(U^E).
\end{equation*}
Using that $\nabla_{\End} u^E = \Id_E$ outside of $Z$, by \eqref{eq:Rnabla},
\begin{equation*}
    R^E = I_E - \nabla_{\End} U^E,
\end{equation*}
which is the original definition of $R^E$ from \cite{AW1}. From this definition it is clear
that $\nabla_{\End} R^E = 0$.
The current $R^E$ satisfies the fundamental property that if $E$ is a free resolution of $\Ok/\mathcal{J}$,
then $\ann_{\Ok} R^E = \mathcal{J}$.

Since $R^E$ is a $\End(E)$-valued current, it consists of various components $R^\ell_k$,
where $R^\ell_k$ is the part of $R^E$ taking values in $\Hom(E_\ell,E_k)$ and $R^\ell_k$ is a $(0,k-\ell)$-current.
In case we know more about the complex $E$, more can be said about which components $R^\ell_k$
are non-vanishing. First, if $Z$ is the set where $E$ is not pointwise exact, since $R^\ell_k$
is a pseudomeromorphic $(0,k-\ell)$-current with support in $Z$, 
\begin{equation*}
    R^\ell_k = 0 \text{ if $k-\ell < \codim Z$ }.
\end{equation*}
If $E$ is exact, i.e., a free resolution, then $R^\ell_k = 0$ if $\ell \neq 0$, \cite{AW1}*{Theorem~3.1}.
We thus get that if $E$ is a free resolution of length $N$ of $\Ok/\mathcal{J}$, and $p = \codim Z(\mathcal{J})$, then
\begin{equation*}
    R^E = \sum_{k = p}^N R^0_k.
\end{equation*}

\subsection{Residue currents associated to the Koszul complex} \label{ssectbm}

Let $f = (f_1,\dots,f_p)$ be a tuple of holomorphic functions.
Then there exists a well-known complex associated to $f$, the Koszul
complex $(\bigwedge^k \Ok^{\oplus p},\delta_f)$ of $f$, which is pointwise exact outside
of the zero set $Z(f)$ of $f$. We let $e_1,\dots,e_p$ be the trivial frame of $\Ok^{\oplus p}$,
and identify $f$ with the section $f = \sum f_i e_i^*$ of $(\Ok^{\oplus p})^*$,
so that $\delta_f$ is the contraction with $f$.

In \cite{PTY} Passare, Tsikh and Yger defined the \emph{Bochner-Martinelli current} of a tuple $f$,
which we will denote by $R^f$.
One way of defining it is as the Andersson-Wulcan current associated
to the Koszul complex of $f$, see \cite{AndIdeals} for a presentation from this viewpoint.

In case the tuple $f$ defines a complete intersection, the Koszul complex of $f$
is exact, i.e., a free resolution of $\Ok/\mathcal{J}(f)$, so the annihilator of
the Bochner-Martinelli current equals $\mathcal{J}(f)$.
Another current with the same annihilator is the Coleff-Herrera product of $f$,
\eqref{comp:eqchp}, which can be defined for examples as
\begin{equation*}
    \dbar\frac{1}{f_p}\wedge\dots\wedge \dbar\frac{1}{f_1}  := \lim_{\epsilon\to 0^+} \frac{\dbar \chi(|f_p|^2/\epsilon)}{f_p} \wedge\dots\wedge \frac{\dbar \chi(|f_1|^2/\epsilon)}{f_1}.
\end{equation*}
In fact, these two currents coincide.

\begin{thm} \label{thmbmch}
    Let $f = (f_1,\dots,f_p)$ be a tuple of holomorphic functions defining a complete intersection.
    Let $R^f$ be the Bochner-Martinelli current of $f$, $R^f = \mu \wedge e_1\wedge \dots \wedge e_p$,
    and let $\mu^f$ be the Coleff-Herrera product of $f$. Then, $\mu = \mu^f$.
\end{thm}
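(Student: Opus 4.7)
The plan is to compute the form $u$ from Section~\ref{ssectrescur} explicitly for the Koszul complex of $f$, identify its top component with the classical Bochner-Martinelli kernel, and then reduce the theorem to comparing two analytic continuations.

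First I would determine $\sigma$ for the Koszul complex. Since $\delta_f$ is contraction with $f = \sum f_i e_i^*$, the minimal right inverse with respect to the trivial Hermitian metric on $E_k = \bigwedge^k \Ok^{\oplus p}$ is $\sigma = s \wedge \cdot$, where $s = \sum_i \bar f_i e_i / |f|^2$; indeed $\delta_f(s) = 1$ together with Leibniz gives $s \wedge \delta_f(\alpha) + \delta_f(s \wedge \alpha) = \alpha$, and $\sigma$ visibly vanishes on $\ker \delta_f^\perp$ in the right places. Substituting into \eqref{eqdefu} yields $u_k = s \wedge (\dbar s)^{k-1}$, and in particular
\[
    u_p = s \wedge (\dbar s)^{p-1},
\]
which, written as an element of $\Hom(E_0, E_p)$, has $e_1 \wedge \dots \wedge e_p$-coefficient proportional to the Bochner-Martinelli form
\[
    b_f = \frac{1}{|f|^{2p}} \sum_{i=1}^p (-1)^{i-1} \bar f_i\, \dbar \bar f_1 \wedge \dots \widehat{\dbar \bar f_i} \dots \wedge \dbar \bar f_p.
\]

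Next, by the vanishing statement from Proposition~2.2 of \cite{AW1} recalled at the end of Section~\ref{ssectrescur}, the hypothesis $\codim Z(f) = p$ together with the fact that the Koszul complex has length $p$ forces $R^f = R^0_p$. Writing $R^f = \mu \wedge e_1 \wedge \dots \wedge e_p$, the statement $\mu = \mu^f$ therefore reduces to the scalar identity
\[
    \dbar |f|^{2\lambda} \wedge b_f \big|_{\lambda = 0} = \dbar \frac{1}{f_p} \wedge \dots \wedge \dbar \frac{1}{f_1}.
\]

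To prove this identity I would pass to a log resolution $\pi : \tilde X \to X$ of Hironaka type on which each $f_i \circ \pi$ is a monomial times a non-vanishing holomorphic factor. On $\tilde X$, the pullbacks of $b_f$, of $|f|^{2\lambda}$, and of the separate factors $|f_i|^{2\lambda_i}$ are explicit in local coordinates, and their analytic continuations to the origin can be read off as principal-value and $\dbar$-of-principal-value currents in the monomial coordinates. The main obstacle is reconciling the two regularizations: the definition of $R^f_p$ uses the coupled weight $|f|^{2\lambda} = (\sum |f_i|^2)^\lambda$, whereas the Coleff-Herrera product uses separable weights $\prod |f_i|^{2\lambda}$ with a single parameter. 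This is precisely the point where the complete intersection hypothesis enters decisively: on each chart of $\tilde X$ meeting the strict transform of $Z(f)$, the monomial parts of the distinct $f_i$ involve disjoint exceptional coordinates, so the leading term of $|f|^{2\lambda}$ behaves, modulo currents of lower degree that are killed by the residue, like $\prod |f_i|^{2\lambda}$. Once chart-by-chart equality is established on $\tilde X$, pushing forward by $\pi_*$ yields the required identity on $X$, and hence $\mu = \mu^f$.
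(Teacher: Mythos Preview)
The paper does not give its own proof of Theorem~\ref{thmbmch}; it merely cites \cite{PTY}, Theorem~4.1 and \cite{AndCH}, Corollary~3.2. So there is nothing in the paper to compare against beyond those pointers.

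Your steps up to the scalar identity are fine and standard: the identification of $\sigma$ with exterior multiplication by $s=\sum \bar f_i e_i/|f|^2$, the resulting $u_p=s\wedge(\dbar s)^{p-1}$, and the reduction $R^f=R^0_p$ via the codimension bound are all correct. The real content of the theorem is exactly the displayed scalar identity, and that is where your argument becomes a sketch rather than a proof.

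The specific gap is the sentence ``on each chart of $\tilde X$ meeting the strict transform of $Z(f)$, the monomial parts of the distinct $f_i$ involve disjoint exceptional coordinates.'' Complete intersection does not imply this. A log resolution principalizes the \emph{ideal} $\mathcal{J}(f)$, so that locally $\pi^*f_i=f_0\,f_i'$ with a common monomial $f_0$ and a tuple $f'$ that is nowhere-vanishing; it does \emph{not} arrange that the individual $\pi^*f_i$ are monomials, let alone monomials in disjoint variables. Consequently the heuristic ``$|f|^{2\lambda}$ behaves like $\prod|f_i|^{2\lambda}$'' is not available in the way you suggest. The actual arguments in the cited references proceed differently: \cite{PTY} works with the principalized form and does a careful chart computation of the analytic continuation, while \cite{AndCH} avoids the direct comparison altogether by proving that both $\mu$ and $\mu^f$ lie in $CH_{Z(f)}$ with the same annihilator and then invoking a uniqueness/factorization result for Coleff--Herrera currents. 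Either route is available to you, but the ``disjoint monomials'' shortcut is not.
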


The theorem was originally proved in \cite{PTY}*{Theorem~4.1}. See also \cite{AndCH}*{Corollary~3.2}
for an alternative proof.

\subsection{Coleff-Herrera currents} \label{ssectchcurr}

Coleff-Herrera currents (in contrast to Coleff-Herrera \emph{products} as discussed above)
were introduced in \cite{DS} (under the name ``locally residual currents''),
as canonical representatives of cohomology classes in moderate local cohomology.
Let $Z$ be a subvariety of pure codimension $p$ of a complex manifold $X$.
A $(*,p)$-current $\mu$ on $X$ is a \emph{Coleff-Herrera current}, denoted $\mu \in CH_Z$,
if $\dbar \mu = 0$, $\overline{\psi} \mu = 0$ for all holomorphic functions $\psi$ vanishing
on $Z$, and $\mu$ has the standard extension property, SEP, with respect to $Z$, i.e.,
${\bf 1}_V \mu = 0$ for any hypersurface $V$ of $Z$.

This description of Coleff-Herrera currents is due to Bj\"ork, see \cite{BjDmod}*{Chapter~3}
and \cite{BjAbel}*{Section~6.2}. In \cite{DS} locally residual currents were defined
as currents of the form $\omega \wedge R^{h}$, where $\omega$ is a holomorphic
$(*,0)$-form, and $Z = Z(h)$ (at least if $Z$ is a complete intersection defined by $h$).

One particular case of Coleff-Herrera currents that will be of interest to us
are Andersson-Wulcan currents $R^E$ associated to free resolutions
$(E,\varphi)$ of minimal length of Cohen-Macaulay modules $\Ok/\mathcal{J}$.
Such a current is $\dbar$-closed since $\nabla R^E = 0$ implies that
$\dbar R^E_p = \varphi_{p+1} R^E_{p+1} = 0$ since $E$ is assumed to be of minimal
length. The other properties needed in order to be a Coleff-Herrera current
are satisfied by the fact that they are pseudomeromorphic, Proposition~\ref{proppmdim}
and Proposition~\ref{proppmantiholo}.

\subsection{Singularity subvarieties of free resolutions} \label{ssectbef}

In the study of residue currents associated to finitely generated $\Ok$-modules
an important ingredient is certain singularity subvarieties associated to the module.
Given a free resolution $(E,\varphi)$ of a finitely generated module $G$, the variety $Z^E_k$ is defined
as the set where $\varphi_k$ does not have optimal rank. These sets are independent of the choice of free
resolution. Note that these varieties can equally well be defined for any complex of free $\Ok$-modules
$(E,\varphi)$ which is generically exact.

The fact that these sets are important in the study of residue currents associated to generically exact
Hermitian complexes stems from the following. Outside of $Z_k^E$ the
form $\sigma_k^E$ defined in Section~\ref{ssectrescur} is smooth, so by using that
$\sigma_{l+1}^E \dbar \sigma_l^E = \dbar \sigma_{l+1}^E \sigma_l^E$ (see \cite{AW1}*{(2.3)}),
$R_k^E  = \dbar \sigma_k^E R_{k-1}^E$ outside of $Z_k^E$. This combined with the dimension
principle for pseudomeromorphic currents allows for inductive arguments regarding
residue currents.

If $\codim G = p$, then $Z_k^E = \supp G$ for $k \leq p$, \cite{Eis}*{Corollary~20.12}.
In addition, by \cite{Eis}*{Theorem~20.9},
\begin{equation} \label{eq:codimZkk}
    \codim Z^E_k \geq k.
\end{equation}
In particular,
\begin{equation} \label{eq:codimZE}
    \codim Z^E_k \geq \codim G.
\end{equation}
In fact, \cite{Eis}*{Theorem~20.9} is a characterization of exactness, the
\emph{Buchsbaum-Eisenbud criterion}, which says that a generically exact complex $(E,\varphi)$ of free
modules is exact if and only if $\codim Z_k^E \geq k$.

\section{A comparison formula for Andersson-Wulcan currents} \label{sectcomparisonformula}

The starting point of Theorem~\ref{thmRcomparisonIdeals} is that when $\mathcal{I} \subseteq \mathcal{J}$,
the natural surjection $\pi : \Ok/\mathcal{I} \to \Ok/\mathcal{J}$
induces a morphism of complexes $a : (F,\psi) \to (E,\varphi)$, where $(F,\psi)$ and 
$(E,\varphi)$ are free resolutions of $\Ok/\mathcal{I}$ and $\Ok/\mathcal{J}$ respectively.
The existence of such a morphism holds much more generally in homological algebra,
of which the following formulation is suitable for our purposes. This is sometimes
referred to as the \emph{comparison theorem}.

\begin{prop} \label{propcomplexcomparison}
    Let $\alpha : G \to H$ be a homomorphism of $\Ok$-modules, let $(F,\psi)$ be a complex of
    free $\Ok$-modules with $\coker \psi_1 = G$, and let $(E,\varphi)$ be a free resolution of $H$.
    Then, there exists a morphism $a : (F,\psi) \to (E,\varphi)$ of complexes which extends $\alpha$.
    If $\tilde{a}$ is any other such morphism, then there exists a homotopy $s : (F,\psi) \to (E,\varphi)$
    of degree $-1$ such that $a_i - \tilde{a}_i = \varphi_{i+1} s_i - s_{i-1} \psi_i$.
\end{prop}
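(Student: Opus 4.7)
The plan is to construct the morphism $a$ and the homotopy $s$ by induction on degree, using only that each $F_i$ is free (hence projective) together with exactness of $(E,\varphi)$. For existence, I first lift the composition $F_0 \twoheadrightarrow \coker \psi_1 = M \xrightarrow{\alpha} N$ along the surjection $E_0 \twoheadrightarrow N$ to obtain $a_0 : F_0 \to E_0$; projectivity of $F_0$ makes this possible, and by construction the bottom square involving $\alpha$ commutes. For the inductive step, assuming $a_0,\ldots,a_i$ have been chosen so that $\varphi_k a_k = a_{k-1}\psi_k$ for $k \leq i$, I check that $a_i \psi_{i+1}$ takes values in $\ker\varphi_i$: when $i \geq 1$ this is immediate from $\varphi_i a_i \psi_{i+1} = a_{i-1}\psi_i\psi_{i+1} = 0$, and when $i = 0$ it follows because the composition $F_1 \to F_0 \to E_0 \to N$ factors through $\im \psi_1 \subseteq \coker\psi_1 = M$, on which $\alpha$ vanishes. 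Exactness of $(E,\varphi)$ gives $\ker\varphi_i = \im\varphi_{i+1}$, and projectivity of $F_{i+1}$ supplies a lift $a_{i+1} : F_{i+1} \to E_{i+1}$ with $\varphi_{i+1} a_{i+1} = a_i \psi_{i+1}$.

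For the uniqueness up to homotopy, set $b := a - \tilde{a}$. Then $b$ is a morphism of complexes, and the induced map $M \to N$ is zero. I construct $s$ by the same inductive scheme: taking $s_{-1} := 0$, the map $b_0 : F_0 \to E_0$ lands in $\ker(E_0 \to N) = \im \varphi_1$ (by the vanishing on $M$), so projectivity of $F_0$ yields $s_0 : F_0 \to E_1$ with $\varphi_1 s_0 = b_0$, establishing the homotopy formula in degree zero. Suppose now that $s_0,\ldots,s_{i-1}$ satisfy the stated relation. Define the candidate right-hand side $c_i := b_i - s_{i-1}\psi_i$ (with signs matching the formulation in the statement), and verify $\varphi_i c_i = 0$: apply $\varphi_i$, use that $b$ is a chain map to rewrite $\varphi_i b_i$ as $b_{i-1}\psi_i$, then substitute the inductive relation $\varphi_i s_{i-1} = b_{i-1} + s_{i-2}\psi_{i-1}$ and invoke $\psi_{i-1}\psi_i = 0$; the two occurrences of $b_{i-1}\psi_i$ cancel. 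Hence $c_i$ takes values in $\ker\varphi_i = \im\varphi_{i+1}$, and projectivity of $F_i$ gives the desired $s_i : F_i \to E_{i+1}$.

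The proof is a routine double induction; the only point requiring care is keeping the signs in the homotopy formula consistent throughout, but this is pure bookkeeping. The conceptual content is the standard pair of facts that projective modules lift along surjections and that exactness of the target complex makes available at every degree a surjection onto the kernel needed to run the induction.
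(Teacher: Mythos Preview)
Your argument is precisely the standard inductive diagram chase, and this is exactly what the paper has in mind: it does not spell out a proof but simply remarks that both the existence of $a$ and of the homotopy $s$ follow by defining them inductively via a straightforward diagram chase, citing \cite{Eis}, Proposition~A3.13. One minor point of bookkeeping: with the sign convention $b_i = \varphi_{i+1}s_i - s_{i-1}\psi_i$ as stated, your $c_i$ and the inductive relation you quote do not quite propagate the same sign to the next level (you end up proving $b_i = \varphi_{i+1}s_i + s_{i-1}\psi_i$), but as you note this is pure bookkeeping and does not affect the substance of the argument.
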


That $a$ extends $\alpha$ means that the map induced by $a_0$ on
$F_0/(\im \psi_1) \cong G \to H \cong E_0/(\im \varphi_1)$ equals $\alpha$.
Both the existence and uniqueness up to homotopy of $a$ follows from defining $a$ or
$s$ inductively by a relatively straightforward diagram chase,
see \cite{Eis}*{Proposition~A3.13}.

\medskip

This is the general formulation of our main theorem, Theorem~\ref{thmRcomparisonIdeals}.

\begin{thm} \label{thmRcomparison}
Let $a : (F,\psi) \to (E,\varphi)$ be a morphism of generically exact Hermitian complexes, and
let $M' := U^E a U^F$ be the product of the almost semi-meromorphic currents $U^E$ and $a U^F$.
Let $M$ be the residue
\begin{equation} \label{eqmdef}
    M := R(U^E a U^F)
\end{equation}
Then
\begin{equation} \label{eqRcomparison2}
    R^E a - a R^F = \nabla_{\End} M,
\end{equation}
where $\nabla_{\End}$ acts on the complex $(E\oplus F,\varphi \oplus \psi)$.
\end{thm}

By definition of the residue, if $h$ is a tuple of holomorphic functions
such that $h \not\equiv 0$, and $Z(h)$ contains the set where $(E,\varphi)$ and $(F,\psi)$ are not pointwise exact, then
\begin{equation*}
  M = R(U^E a U^F) = \lim_{\epsilon\to 0^+} \dbar\chi(|h|^2/\epsilon) \wedge U^E a U^F.
\end{equation*}

Note that $\nabla_{\End}$ is defined with respect to the complex 
$(E \oplus F, \varphi \oplus \psi)$, and the superstructure, as in
Section~\ref{ssectsuper}, of this complex is the grading
$(E\oplus F)^+ = E^+ \oplus F^+$, $(E\oplus F)^- = E^- \oplus F^-$.

If we let $M^\ell_k$ be the part of $M$ in \eqref{eqmdef} with values in $\Hom(F_\ell,E_k)$, we get from \eqref{eqnablaenddef}
and \eqref{eqnablaleibniz} that
\begin{equation} \label{eqRcomparison-component-gen}
    (R^E)^\ell_k a_\ell - a_k (R^F)^\ell_k  = \varphi_{k+1} M^\ell_{k+1} + M^{\ell-1}_k \psi_\ell - \dbar M^\ell_k. 
\end{equation}
In the important case $\ell = 0$, if we write $M_k$ for the $\Hom(F_0,E_k)$-valued part of $M$, and
$R^E_k$ and $R^F_k$ for the $\Hom(E_0,E_k)$- and $\Hom(F_0,F_k)$-valued parts of $R^E$ and $R^F$, we get
\begin{equation} \label{eqRcomparison-component-zero}
    R^E_k a_0 - a_k R^F_k = \varphi_{k+1} M_{k+1} - \dbar M_k.
\end{equation}

\begin{proof}
    Since $a$ is a morphism of complexes, $\varphi a = a \psi$, and hence $\nabla_{\End} a = \varphi a - a \psi = 0$.
    Let $Z$ be a variety containing the sets where $(E,\varphi)$ and $(F,\psi)$ are not pointwise exact.
    Since outside of $Z$, $\nabla_{\End} U^E = \Id_E$ and $\nabla_{\End} U^F = \Id_F$, we get using
    \eqref{eqnablaleibniz} and the fact that $U^E$ has odd degree and $a$ has even degree that
    \begin{equation*}
        \nabla_{\End} M' = a U^F - U^E a
    \end{equation*}
    outside of $Z$. Since $M'$, $a U^F$ and $U^E a$ are almost semi-meromorphic,
    \begin{equation*}
        M = R(U^E a U^F) = a U^F - U^E a - \nabla_{\End} M'
    \end{equation*}
    by \eqref{eq:Rnabla}.
    Applying $\nabla_{\End}$ to this equation we get \eqref{eqRcomparison2} since
    $\nabla_{\End}^2 = 0$, and
    \begin{align*}
        \nabla_{\End} (a U^F - U^E a) &= a \nabla_{\End} U^F - \nabla_{\End} U^E a = \\
        &= a (\Id_{F} - R^F) - (\Id_{E} - R^E) a = R^E a - a R^F.
    \end{align*}
\end{proof}

The main idea in the proof of Theorem~\ref{thmRcomparison}, to form a $\nabla$-potential
to $R - R'$, essentially of the form $\nabla(U\wedge U')$, appears in various works regarding residue
currents. One example is in \cite{AndIdeals} and \cite{AW1} where this idea is used
to prove that under suitable conditions the residue currents do
not depend on the choice of metrics. This corresponds to applying the comparison formula in the
case when $(E,\varphi)$ and $(F,\psi)$ have the same underlying complex, but
are equipped with different metrics.

Another instance where such a construction appears is in \cite{Lar},
regarding the transformation law for Coleff-Herrera products of (weakly) holomorphic functions,
of which its relation to the comparison formula is elaborated in Remark~\ref{remtransl}.
It also appears in \cite{AndResCrit} and \cite{W1}, regarding products of residue currents,
but the relation to the comparison formula is not as apparent.

\begin{remark}
    Note that in Proposition~\ref{propcomplexcomparison} the complex $(F,\psi)$ does
    not have to be exact. For our comparison formula to work, neither the complex
    $(E,\varphi)$ has to be exact, as long as the morphism $a$ exists.
    For example, if we have $f = g A$ for some tuples $g$ and $f$ of holomorphic functions,
    and a holomorphic matrix $A$ as in Remark~\ref{remtransl}, then $A$ induces a morphism
    between the Koszul complexes of $f$ and $g$. We can then apply the comparison formula
    also when the Koszul complex of $g$ is not exact.
\end{remark}

\subsection{The current $M$}

We will here describe the current $M$ a bit more thoroughly. First of all,
we have the following inductive description.

\begin{lma} \label{lma:Mellk-induction}
    Let $(E,\varphi)$, $(F,\psi)$, $a : (F,\psi) \to (E,\varphi)$ and $M$ be as in Theorem~\ref{thmRcomparison},
    and let $M^\ell_k$ be the part of $M$ which takes values in $\Hom(F_\ell,E_k)$.
    Then, outside of $Z^E_k$, where $\sigma^E_k$ is smooth,
    \begin{equation} \label{eq:Mellk-induction}
        M^\ell_k = \dbar \sigma^E_k M^\ell_{k-1} - \sigma^E_k a_{k-1} (R^F)^\ell_{k-1}.
    \end{equation}
\end{lma}

\begin{proof}
    Using that $\sigma^E_{j+1} \dbar\sigma^E_j = \dbar\sigma^E_{j+1}\sigma^E_j$, one gets that
    \begin{equation*}
        \sigma^E_k\dbar\sigma^E_{k-1}\cdots\dbar\sigma^E_{m+1} = \dbar\sigma^E_k \cdots\dbar\sigma^E_{m+2}\sigma^E_{m+1}.
    \end{equation*}
    Hence,
    \begin{equation*}
        M^\ell_k = \sum_{m=\ell+1}^{k-1} R(\dbar\sigma^E_k \dbar\sigma^E_{k-1} \cdots \dbar\sigma^E_{m+2} \sigma^E_{m+1} a_m
        \sigma^F_m \dbar\sigma^F_{m-1} \cdots \dbar\sigma^F_\ell).
    \end{equation*}
    Splitting the sum into when $\ell+1 \leq m \leq k-2$ and when $m = k-1$, and using \eqref{eq:Romega}, we get 
    \begin{align*}
        M^\ell_k = \dbar\sigma^E_k \sum_{m=\ell+1}^{k-2} R(\dbar\sigma^E_{k-1} \dbar\sigma^E_{k-2} \cdots \dbar\sigma^E_{m+2} \sigma^E_{m+1} a_m
        \sigma^F_m \dbar\sigma^F_{m-1} \cdots \dbar\sigma^F_\ell) \\
        - \sigma^E_k a_{k-1} R(\sigma^F_{k-1} \dbar\sigma^F_{k-2} \cdots \dbar\sigma^F_\ell) = \dbar \sigma^E_k M^\ell_{k-1} -
        \sigma^E_k a_{k-1} (R^F)^\ell_{k-1}.
    \end{align*}
\end{proof}

In order to understand when parts of the current $M$ in Theorem~\ref{thmRcomparison} vanishes,
we begin with the following lemma about when parts of the current $R^F$ vanishes.

\begin{lma} \label{lma:Rvanishing}
    Let $(F,\psi)$ be a generically exact Hermitian complex,
    and assume that $\codim Z^F_{\ell+m} \geq m+1$ for $m=1,\dots,k-\ell$.
    Then $(R^F)^\ell_k = 0$, where $(R^F)^\ell_k$ is the part of $R^F$ with values in $\Hom(F_\ell,F_k)$.
\end{lma}

In the special case when $(F,\psi)$ is a free resolution and $\ell \geq 1$, then
$\codim Z^F_{\ell+m} \geq \ell+m \geq m+1$, see \eqref{eq:codimZkk}.
The lemma thus implies that
\begin{equation} \label{eq:Rvanishing}
    (R^F)^\ell_k = 0 \text{ for $\ell \geq 1$}
\end{equation}
under these assumptions, which is \cite{AW1}*{Theorem~3.1}. The proof of Lemma~\ref{lma:Rvanishing} is the same as the proof
of \cite{AW1}*{Theorem~3.1}, as it only uses these inequalities about the codimension of the
sets $Z^F_{\ell+m}$ (and the ``vague principle'' about vanishing of residue currents
referred to in the proof was later formalized as the dimension principle, Proposition~\ref{proppmdim}).

\begin{prop} \label{prop:Mvanishing}
    Let $(E,\varphi)$, $(F,\psi)$, $a : (F,\psi) \to (E,\varphi)$ and $M$ be as in Theorem~\ref{thmRcomparison},
    and let $M^\ell_k$ be the part of $M$ which takes values in $\Hom(F_\ell,E_k)$.
    If
    \begin{align}
        \label{eq:ZF-codim} & \codim Z^F_{\ell+m} \geq m+1 \text{ for $m=1,\dots,k-\ell-1$ and} \\
        \label{eq:ZE-codim} & \codim Z^E_{\ell+m} \geq m \text{ for $m=2,\dots,k-\ell$,}
    \end{align}
    then $M^\ell_k = 0$.
\end{prop}

\begin{proof}
    We prove this by induction over $k-\ell$, starting with the first non-trivial case $k = \ell+2$.
    Since $M^\ell_{\ell+2}= R(\sigma^E_{\ell+2} a_{\ell+1}\sigma^F_{\ell+1})$ has support where
    $\sigma^E_{\ell+2}$ and $\sigma^F_{\ell+1}$ are not smooth, $\supp M^\ell_{\ell+2} \subseteq W  := Z^E_{\ell+2} \cup Z^F_{\ell+1}$.
    By assumption, $\codim W \geq 2$, and since $M^\ell_{\ell+2}$ is a pseudomeromorphic $(0,1)$-current, it is $0$ by the dimension principle.

    Note that the assumptions \eqref{eq:ZF-codim} imply by Lemma~\ref{lma:Rvanishing} that $(R^F)^\ell_{\ell+m} = 0$
    for $1 \leq m \leq k-\ell-1$.
    Assume now that we have proven that $M^\ell_{\ell+m-1} = 0$ for $3 \leq m \leq k-\ell$.
    Then, by \eqref{eq:Mellk-induction}, outside of $Z^E_{\ell+m}$,
    \begin{equation*}
        M^\ell_{\ell+m} = \dbar\sigma^E_{\ell+m} M^\ell_{\ell+m-1} - \sigma^E_{\ell+m} a_{\ell+m}(R^F)^\ell_{\ell+m-1}.
    \end{equation*}
    Since the currents $M^\ell_{\ell+m-1}$ and $R^\ell_{\ell+m-1}$ both vanish, we thus get that $M^\ell_{\ell+m}$
    vanishes outside of $Z^E_{\ell+m}$. Since $M^\ell_{\ell+m}$ is a pseudomeromorphic $(0,m-1)$-current
    with support in $Z^E_{\ell+m}$ of codimension $\geq m$, it is $0$ by the dimension principle.
    By induction, we thus conclude that $M^\ell_k = 0$.
\end{proof}

\begin{cor}
    Let $(E,\varphi)$, $(F,\psi)$, $a : (F,\psi) \to (E,\varphi)$ and $M$ be as in Theorem~\ref{thmRcomparison},
    and let $M^\ell_k$ be the part of $M$ which takes values in $\Hom(F_\ell,E_k)$.
    Assume that $(F,\psi)$ and $(E,\varphi)$ are free resolutions of modules $G$ and $H$ respectively.
    Then,
    \begin{equation} \label{eq:Mell-vanishing}
        M^\ell_k = 0 \text{ for $\ell=1,\dots,k-2$},
    \end{equation}
    and if $G$ and $H$ have codimension $\geq k$, then
    \begin{equation} \label{eq:Mcodim-vanishing}
        M^0_k = 0.
    \end{equation}
    In addition, for any $k$,
    \begin{equation} \label{eq:Mright-vanishing}
        M^0_k \psi_1 = 0.
    \end{equation}
\end{cor}

\begin{proof}
    By \eqref{eq:codimZkk}, for all $j \geq 1$, $\codim Z^E_j \geq j$,
    and $\codim Z^F_j \geq j$, and thus, \eqref{eq:Mell-vanishing} follows directly from Proposition~\ref{prop:Mvanishing}.
    In addition, if $k < \codim G$ and $k < \codim H$, then $\codim Z^F_j \geq \codim G$ and $\codim Z^E_j \geq \codim H$
    by \eqref{eq:codimZE}, so \eqref{eq:Mcodim-vanishing} also follows directly from Proposition~\ref{prop:Mvanishing}.

    By \eqref{eqRcomparison-component-gen},
    \begin{equation*}
        M^0_k \psi_1 = -\varphi_{k+1} M^1_{k+1} + \dbar M^1_k + (R^E)^1_k a_1 - a_k (R^F)^1_k,
    \end{equation*}
    and by \eqref{eq:Rvanishing} and \eqref{eq:Mell-vanishing}, all currents in the right-hand side vanish, so we have
    proven \eqref{eq:Mright-vanishing}.
\end{proof}

\section{A transformation law for Andersson-Wulcan currents associated to Cohen-Macaulay modules} \label{secttranscm}

In this section we state and prove the general version of our transformation law
for Andersson-Wulcan currents associated to Cohen-Macaulay modules.

\begin{thm} \label{thmtranscmgen}
    Let $G$ be a finitely generated $\Ok$-module of codimension $p$, and assume that $G$ is Cohen-Macaulay.
    Let $(E,\varphi)$ be a free resolution of $G$ of length $p$, and let $(F,\psi)$ be a generically exact
    Hermitian complex such that the set $Z$ where $(F,\psi)$ is not pointwise exact has codimension $\geq p$.
    If $a : (F,\psi) \to (E,\varphi)$ is a morphism of complexes, then
    \begin{equation*}
        R^E_p a_0 = a_p R^F_p.
    \end{equation*}
    If $a_0$ is any morphism $F_0 \to E_0$ such that $a_0(\im \psi_1) \subseteq \im \varphi_1$, then
    $a_0$ can be extended to a morphism $a : (F,\psi) \to (E,\varphi)$.
\end{thm}

Note in particular, if $F_0 \cong \Ok \cong E_0$, $a_0 : F_0 \to E_0$ is this isomorphism,
and $\mathcal{J} := \im \varphi_1$, and $\mathcal{I} := \im \psi_1$, then $a_0$ can be extended
if $\mathcal{I} \subseteq \mathcal{J}$, and the morphism $a$ then extends the natural surjection
$\pi : \Ok/\mathcal{I} \to \Ok/\mathcal{J}$.

\begin{proof}
    The last part about the existence of $a$ follows immediately from
    Proposition~\ref{propcomplexcomparison}.

    By \eqref{eqRcomparison-component-zero},
    \begin{equation*}
        R^E_p a_0 = a_p R^F_p + \varphi_{p+1} M^0_{p+1} - \dbar M^0_p.
    \end{equation*}
    Since $(E,\varphi)$ has length $p$, $\varphi_{p+1} M^0_{p+1} = 0$, and $M^0_p = 0$ by \eqref{eq:Mcodim-vanishing}.
\end{proof}

\begin{ex} \label{excmcurve}
    Let $\pi : \C \to \C^3$, $\pi(t) = (t^3,t^4,t^5)$, and let $Z$ be the germ at $0$ of $\pi(\C)$.
    One can show that the ideal of holomorphic functions vanishing at $Z$
    equals $\mathcal{J} = (y^2-xz,x^3-yz,x^2y-z^2)$.

    The module $\Ok/\mathcal{J}$ has a minimal free resolution
    \begin{equation*}
        0 \to \Ok^{\oplus 2} \xrightarrow[]{\varphi_2} \Ok^{\oplus 3} \xrightarrow[]{\varphi_1} \Ok \to \Ok/\mathcal{J},
    \end{equation*}
    where
    \begin{equation*}
        \varphi_2 = \left[ \begin{array}{cc} -z & -x^2 \\ -y & -z \\ x & y \end{array} \right]
            \text{ and }
            \varphi_1 = \left[ \begin{array}{ccc} y^2-xz & x^3-yz & x^2y-z^2 \end{array} \right].
    \end{equation*}
    To check that this is a resolution, one verifies first that it indeed is a complex.
    Secondly, since $I_1 = I(\varphi_1) = \mathcal{J}$, and $I_2 = I(\varphi_2) = \mathcal{J}$ (the
    Fitting ideals of $\varphi_1$ and $\varphi_2$), the complex is exact by the Buchsbaum-Eisenbud
    criterion, see Section~\ref{ssectbef} (and note that $Z_k^E = Z(I_k)$).

    In particular, since $\Ok/\mathcal{J}$ has a minimal free resolution of length $2$
    with $\rank E_2 = 2$, $Z$ is Cohen-Macaulay but not a complete intersection.
    However, $Z$ is in fact a set-theoretic complete intersection.
    Let $f = (z^2-x^2y,x^4+y^3-2xyz)$, and $\mathcal{I} = \mathcal{J}(f)$.
    One can verify that $Z(\mathcal{I}) = Z$, and since $\codim Z = 2$, $Z$ is indeed a set-theoretic
    complete intersection.

    Now, let $(E,\varphi)$ be the free resolution of $\Ok/\mathcal{J}$, and $(F,\psi)$ be the Koszul
    complex of $f$, which is a free resolution of $\Ok/\mathcal{I}$ since $f$ is a complete intersection.
    Since $\Ok/\mathcal{J}$ is Cohen-Macaulay and $Z(\mathcal{I}) = Z(\mathcal{J})$, we can apply
    Theorem~\ref{thmtranscm} to $(F,\psi)$ and $(E,\varphi)$. One verifies that $a : (F,\psi) \to (E,\varphi)$,
    \begin{equation*}
        a_2 = \left[ \begin{array}{c} x^3-yz  \\ y^2-xz \end{array} \right] \text{, }
        a_1 = \left[ \begin{array}{cc} 0 & y \\ 0 & x \\  -1 & 0 \end{array} \right] \text{ and }
        a_0 = \left[ \begin{array}{c} 1 \end{array} \right],
    \end{equation*}
    is a morphism of complexes extending the natural surjection $\pi : \Ok/\mathcal{I} \to \Ok/\mathcal{J}$.
    This morphism can be found with for example the computer algebra system Macaulay2.
    Since the current associated to the Koszul complex of a complete intersection $f$ is the Coleff-Herrera
    product of $f$ by Theorem~\ref{thmbmch}, we get by Theorem~\ref{thmtranscm} that
    \begin{equation*}
        R^E = \dbar\frac{1}{x^4+y^3-2xyz}\wedge\dbar\frac{1}{z^2-x^2y} \wedge
        \left[ \begin{array}{c} x^3-yz \\ y^2-xz \end{array} \right].
    \end{equation*}
\end{ex}

The fact that we can express the residue current corresponding to the ideal above
in terms of a Coleff-Herrera product can be done more generally, as the following
example shows.

\begin{ex}
Let $\mathcal{J} \subseteq \Ok$ be a Cohen-Macaulay ideal of codimension $p$,
and let $Z = Z(\mathcal{J})$. Then,  there exists a complete intersection
$(f_1,\dots,f_p)$ such that $Z \subseteq Z(f)$, see for example \cite{Lar2}*{Lemma~19}.
By the Nullstellensatz, there exist $N_i$ such that $f_i^{N_i} \in \mathcal{J}$.
Thus, by replacing $f_i$ by $f_i^{N_i}$, we can assume that $(f_1,\dots,f_p)$ is a complete
intersection such that $\mathcal{J}(f_1,\dots,f_p) \subseteq \mathcal{J}$.
Let $(F,\psi)$ be the Koszul complex of $f$, and let $(E,\varphi)$ be a free resolution of
$\Ok/\mathcal{J}$ of length $p$. By Theorem~\ref{thmtranscm}, we then have that
\begin{equation*}
    R_p^\mathcal{J} = \dbar \frac{1}{f_p}\wedge \dots \wedge \dbar \frac{1}{f_1} \wedge
    a_p(e_1\wedge\dots\wedge e_p),
\end{equation*}
where $a_p$ is the morphism in Theorem~\ref{thmtranscm}, since the current associated with the
Koszul complex of $f$ is the Coleff-Herrera product of $f$.
\end{ex}

\begin{remark} \label{remtransl}

    The transformation law for Coleff-Herrera products is a corollary of Theorem~\ref{thmtranscm}
    in the following way.
    Let $f$ and $g$ be two complete intersections of codimension $p$, and assume that there exists
    a matrix $A$ of holomorphic functions such that $f = g A$. 

Since $f$ and $g$ are complete intersections, the Koszul complexes
$(\bigwedge \Ok^{\oplus p},\delta_f)$ and $(\bigwedge \Ok^{\oplus p},\delta_g)$
are free resolutions of $\Ok/\mathcal{J}(f)$ and $\Ok/\mathcal{J}(g)$.
Since $\mathcal{J}(f) \subseteq \mathcal{J}(g)$, we get a morphism $a$ of the Koszul complexes
of $f$ and $g$ induced by the inclusion $\pi : \Ok/\mathcal{J}(f) \to \Ok/\mathcal{J}(g)$
by Proposition~\ref{propcomplexcomparison}.
In fact, the morphism $a_k : \bigwedge^k \Ok^{\oplus p} \to \bigwedge^k \Ok^{\oplus p}$
is readily verified to be
$\bigwedge^k A : \bigwedge^k \Ok^{\oplus p} \to \bigwedge^k \Ok^{\oplus p}$,
see \cite{Lar}*{Lemma~7.2}. In particular, $a_p = \bigwedge^p A = \det A$, so since
the Andersson-Wulcan currents associated to the Koszul complexes of $f$ and $g$
are the Coleff-Herrera products of $f$ and $g$, the transformation law
$\mu^g = (\det A) \mu^f$ follows directly from Theorem~\ref{thmtranscm}.

In fact, the proof of Theorem~\ref{thmtranscm} in this particular situation
becomes exactly the proof of the transformation law for Coleff-Herrera products
given in \cite{Lar}*{Theorem~7.1}.
\end{remark}

    As mentioned above, the transformation law for Coleff-Herrera products is a special
    case of Theorem~\ref{thmtranscm}. In \cite{DS2}, two proofs of the transformation law
    are given, and in fact, we can essentially use the same argument as the second proof
    of the transformation law in \cite{DS2}*{p.~54--55}, to prove Theorem~\ref{thmtranscm}.

\begin{proof}[Alternative proof of Theorem~\ref{thmtranscm}]
    Consider $E^p_\mathcal{J} := \Ext^p_{\Ok}(\Ok/\mathcal{J},\Ok)$.
    One way of computing $E^p_\mathcal{J}$ is by taking a free resolution
    $(E,\varphi)$ of $\Ok/\mathcal{J}$, applying $\Hom(\bullet, \Ok)$ and taking cohomology, i.e.,
    $E^p_\mathcal{J} \cong H^p(\Hom(E_\bullet,\Ok))$.
    On the other hand, it can also be computed by taking an injective resolution of $\Ok$,
    which can be taken as the complex of $(0,*)$-currents, $(\mathcal{C}^{0,\bullet},\dbar)$,
    applying $\Hom(\Ok/\mathcal{J},\bullet)$ to this complex, and taking cohomology,
    i.e., $E^p_\mathcal{J} \cong H^p(\Hom(\Ok/\mathcal{J},\mathcal{C}^{0,\bullet}))$.

    Since these are different realizations of $\Ext$, they are naturally isomorphic, and by
    \cite{AndNoeth}*{Theorem~1.5} this isomorphism is given by
    \begin{equation} \label{eqextisom}
        \phi : [\xi]_{H^p(\Hom(E_\bullet,\Ok))} \mapsto
        [\xi R^E_p]_{H^p(\Hom(\Ok/\mathcal{J},\mathcal{C}^{0,\bullet}))}.
    \end{equation}

    We now consider the map $\pi : \Ok/\mathcal{I} \to \Ok/\mathcal{J}$, which
    induces a map $\pi^* : E^p_\mathcal{J} \to E^p_\mathcal{I}$.
    In the first realization of $\Ext$, $\pi^*$ becomes the map
    $a_p^* : H^p(\Hom(E_\bullet,\Ok)) \to H^p(\Hom(F_\bullet,\Ok))$
    induced by $a : (F,\psi) \to (E,\varphi)$.
    In the second realization of $\Ext$, the map becomes just the identity
    map on the currents (due to the fact that currents annihilated by $\mathcal{J}$
    are also annihilated by $\mathcal{I}$).
    Thus, using the naturality of $\pi^*$ and the isomorphism \eqref{eqextisom}
    we get from the commutative diagram
\begin{equation*}
\xymatrix{
    H^p(\Hom(E_\bullet,\Ok)) \ar[r]^{\pi^*} \ar[d]^{\phi} &
    H^p(\Hom(F_\bullet,\Ok)) \ar[d]^{\phi} \\
    H^p(\Hom(\Ok/\mathcal{J},\mathcal{C}^{0,\bullet})) \ar[r]^{\pi^*} &
    H^p(\Hom(\Ok/\mathcal{I},\mathcal{C}^{0,\bullet}))
}
\end{equation*}
    that $[(a_p^*)\xi R_p^F]_{\dbar} = [\xi R_p^E]_{\dbar}$, where
    $\xi$ is a holomorphic section of $\ker \varphi_{p+1}^*$. Hence,
    $\xi a_p R^F_p = \xi R^E_p + \dbar \eta_{\xi}$, where $\eta_\xi$ is annihilated by
    $\mathcal{I}$.
    Since $(E,\varphi)$ has length $p$, $\varphi_{p+1} = 0$, so the equality holds
    for all holomorphic sections $\xi$ of $E_p$, i.e.,
    $a_p R^F_p = R^E_p + \dbar \eta$ for some (vector-valued) current $\eta$ annihilated
    by $\mathcal{I}$.
    Since $a_p$ is holomorphic and $R^F_p$ and $R^E_p$ are in $CH_Z$, see Section~\ref{ssectchcurr},
    where $Z = Z(\mathcal{I})$, we get from the decomposition
    $\ker(\mathcal{C}_Z^{0,p} \stackrel{\dbar}{\to} \mathcal{C}_Z^{0,p+1})
    = CH_Z \oplus \dbar \mathcal{C}_Z^{0,p-1}$,
    see \cite{DS2}*{Theorem~5.1}, that $\dbar \eta = 0$, where $\mathcal{C}_Z^{0,p}$ is the sheaf
    of $(0,p)$-currents supported on $Z$.

    The only difference of the proof here to the proof in \cite{DS2} is
    that we have the isomorphism \eqref{eqextisom} from \cite{AndNoeth},
    while in \cite{DS2} this isomorphism was only available if $\mathcal{J}$
    was a complete intersection ideal, see the proof of \cite{DS}*{Proposition~3.5}.
\end{proof}

We end this section with an example of how we can express Andersson-Wulcan currents associated to
Cohen-Macaulay ideals in terms of Bochner-Martinelli currents.

\begin{ex}
    Let $f = (f_1,\ldots,f_k)$ be a tuple of holomorphic functions, let $\mathcal{J} = \mathcal{J}(f_1,\dots,f_k)$
    and $Z = Z(f)$, and assume that $\codim Z = p$.
    Assume in addition that $\Ok/\mathcal{J}$ is Cohen-Macaulay.
    Note that we do not assume that $f$ is a complete intersection, i.e., that $k = p$.
    Let $\Ok^{\oplus k}$ be the trivial vector bundle with frame $e_1,\ldots,e_k$,
    and consider $f$ as a section of $(\Ok^{\oplus k})^*$, $f = \sum f_i e_i^*$.
    Let $R^f$ be the Bochner-Martinelli current associated with $f$,
    and write $R^f_p = \sum R_I \wedge e_I$, i.e., $R_I \wedge e_I$ is the component of
    $R^f_p$ with values in $e_I := e_{i_1}\wedge \dots \wedge e_{i_p} \in \bigwedge^p \Ok^{\oplus k}$.

    In \cite{AndCH} Andersson proves that if $\mu \in CH_Z$, then there exist holomorphic
    $(*,0)$-forms $\alpha_I$ such that $\mu = \sum \alpha_I \wedge R_I$ (after first
    replacing $f_i$ by $f_i^{N_i}$ such that $f_i^{N_i} \mu = 0$).
    In particular, this applies in our case to $R^\mathcal{J}$, see Section~\ref{ssectchcurr}.
    In \cite{AndCH} the $\alpha_I$ are not explicitly
    given, but when $\mu = R^\mathcal{J}$, we can obtain them from Theorem~\ref{thmtranscmgen}.
    We let $(F,\psi)$ be the Koszul complex of $f$, and $(E,\varphi)$ a minimal free
    resolution of $\Ok/\mathcal{J}$.
    Since the current associated with the Koszul complex of $f$ is the
    Bochner-Martinelli current of $f$, Theorem~\ref{thmtranscmgen} gives the factorization
    \begin{equation*}
        R^\mathcal{J} = \sum \alpha_I \wedge R_I,
    \end{equation*}
    where $\alpha_I = a_p(e_I)$.
\end{ex}

\section{A non Cohen-Macaulay example} \label{sectnoncmex}

When the ideals involved in the comparison formula are not Cohen-Macaulay,
the comparison formula does not have as simple form as in the Cohen-Macaulay
case in Section~\ref{secttranscm}. In this section we illustrate with an
example how one could still use the comparison formula also to compute the
residue current associated to a non Cohen-Macaulay ideal.

\begin{ex}
    Let $Z \subseteq \C^4$ be the variety $Z = \{ x = y = 0 \} \cup \{z = w = 0 \}$.
    The ideal $\mathcal{I}_Z$ of holomorphic functions on $\C^4$ vanishing on $Z$
    equals $\mathcal{I}_Z = \mathcal{J}(xz,xw,yz,yw)$. It can be verified that
    $\mathcal{I}_Z$ has a minimal free resolution $(E,\varphi)$ of the form
    \begin{equation*}
        0 \to \Ok \xrightarrow[]{\varphi_3} \Ok^{\oplus 4} \xrightarrow[]{\varphi_2}
        \Ok^{\oplus 4} \xrightarrow[]{\varphi_1} \Ok \to \Ok/\mathcal{I}_Z,
    \end{equation*}
    where
    \begin{align*}
        &\varphi_3 = \left[\begin{array}{c} w \\ -z \\ -y \\ x \end{array}\right] \text{, }
        \varphi_2 = \left[\begin{array}{cccc} -y & 0 & -w & 0 \\ 0 & -y & z & 0 \\
            x & 0 & 0 & -w \\ 0 & x & 0 & z \end{array}\right] \text{ and } 
        &\varphi_1 = \left[ \begin{array}{cccc} xz & xw & yz & yw \end{array} \right].
    \end{align*}
    Note that $Z$ has codimension $2$, while the free resolution above, which
    is minimal, has length $3$, so $Z$ is not Cohen-Macaulay.

We compare this resolution with the Koszul complex $(F,\psi)$ of the complete
intersection ideal $\mathcal{I} = \mathcal{J}(xz,yw)$.
One can verify that the morphism $a : (F,\psi) \to (E,\varphi)$,
\begin{equation*}
    a_2 = \frac{1}{2}\left[ \begin{array}{c} w \\ z \\ y \\ x \end{array} \right] \text{, }
    a_1 = \left[ \begin{array}{cc} 1 & 0 \\ 0 & 0 \\ 0 & 0 \\ 0 & 1 \end{array} \right] \text{ and }
    a_0 = \left[ 1 \right],
\end{equation*}
is a morphism of complexes extending the natural surjection
$\pi : \Ok/\mathcal{I} \to \Ok/\mathcal{I}_Z$ as in Proposition~\ref{propcomplexcomparison}.

By \eqref{eqRcomparison-component-zero},
$R_2^E = a_2 R_2^F + \varphi_3 M_3 - \dbar M_2$. Note that $M_2 = 0$ by \eqref{eq:Mcodim-vanishing}.
By \eqref{eq:Mellk-induction} and the fact that $M_2 = 0$,
outside of $Z_3^E = \{ 0 \}$ we get that $M_3 = -\sigma^E_3 a_2 R^F_2$.
Thus, outside of $\{ 0 \}$
\begin{equation*}
    R_2^E = (I_{E_2} - \varphi_3 \sigma_3^E) a_2 R_2^F.
\end{equation*}
Then, $R_2^E$ is the standard extension in the sense of \cite{BjAbel}*{Section~6.2},
of \linebreak $(I_{E_2} - \varphi_3 \sigma_3) a_2 R_2^F$. One way to interpret the standard extension
here is that since $R_2^E$ is a pseudomeromorphic $(0,2)$-current defined on all of
$\C^4$, its extension from $\C^4\setminus \{ 0 \}$ is uniquely defined by the
dimension principle.

We have that
\begin{equation*}
    (I_{E_2} - \varphi_3 \sigma_3) a_2 = \frac{1}{|x|^2 +|y|^2+|z|^2+|w|^2}
    \left[\begin{array}{c} w(|y|^2 + |z|^2) \\ z(|x|^2+|w|^2) \\ y(|x|^2+|w|^2) \\ x(|y|^2+|z|^2)
    \end{array}\right].
\end{equation*}
Since $R_2^F = \dbar(1/yw)\wedge\dbar(1/xz)$, see Theorem~\ref{thmbmch}, we get from the
transformation law and Proposition~\ref{proppmantiholo} that $R_2^E$ is the standard
extension of 
\begin{equation*}
    \frac{1}{|x|^2 +|y|^2+|z|^2+|w|^2}
    \left[\begin{array}{c} |z|^2 \dbar\frac{1}{y}\wedge\dbar\frac{1}{xz} \\
    |w|^2 \dbar\frac{1}{yw}\wedge\dbar\frac{1}{x} \\
    |x|^2 \dbar\frac{1}{w}\wedge\dbar\frac{1}{xz} \\
    |y|^2 \dbar\frac{1}{yw}\wedge\dbar\frac{1}{z} \end{array}\right].
\end{equation*}
Using again the transformation law and Proposition~\ref{proppmantiholo}, one gets that
$R_2^E$ is the standard extension of
\begin{equation*}
R_2^E = 
\frac{1}{|z|^2+|w|^2}\left[\begin{array}{c} \overline{z} \\ \overline{w} \\ 0 \\ 0 \end{array}\right]
    \wedge \dbar\frac{1}{y}\wedge\dbar\frac{1}{x} + 
\frac{1}{|x|^2+|y|^2}\left[\begin{array}{c} 0 \\ 0 \\ \overline{x} \\ \overline{y} \end{array}\right]
    \wedge \dbar\frac{1}{w}\wedge\dbar\frac{1}{z}.
\end{equation*}
\end{ex}

\section{The Jacobian determinant of a holomorphic mapping} \label{secthickel}

Throughout this section, we let $f = (f_1,\dots,f_m)$ be a tuple of holomorphic functions, and let $\mathcal{J} := \mathcal{J}(f_1,\dots,f_m)$.
Let $(F,\psi)$ be the Koszul complex of $f$, let $(E,\varphi)$ be a free resolution of $\Ok/\mathcal{J}$, and let $a : (F,\psi) \to (E,\varphi)$
be a morphism of complexes extending the identity morphism $\coker \psi_1 \cong \Ok/\mathcal{J} \cong \coker \varphi_1$, which
exists by Proposition~\ref{propcomplexcomparison}.

\begin{lma} \label{lma:annRf}
    Let $f$, $(E,\varphi)$, and $a$ be as above, let $R^f$ be the Bochner-Martinelli current of $f$, and let
    $R^f_k$ be the part of $R^f$ of bidegree $(0,k)$. For $k < m$, 
    \begin{equation*}
        df_1 \wedge \dots \wedge df_m \wedge a_k R^f_k = 0.
    \end{equation*}
    If $(E,\varphi)$ has length $\leq m$, and if $h$ is a holomorphic function which vanishes on all
    the irreducible components of $Z(f)$ of codimension $m$, then
    \begin{equation*}
        h\, df_1 \wedge \dots \wedge df_m \wedge a_m R^f_m = 0.
    \end{equation*}
\end{lma}

The condition about the length of $(E,\varphi)$ in Theorem~\ref{thm:vasc} comes in due to the following
lemma.

\begin{lma} \label{lma:am}
    Let $f$, $(E,\varphi)$ and $a$ be as above. If $(E,\varphi)$ has length $\leq m$,
    then $a_m$ vanishes on all irreducible components of $Z(f)$ of codimension $< m$.
\end{lma}

\begin{proof}
    Let $V$ be an irreducible component of $Z(f)$ of codimension $< m$.
    Since $\codim Z_m^E \geq m$ by \eqref{eq:codimZkk}, $Z_m^E \cap V$ is nowhere dense in $V$. Thus, by continuity, it is enough
    to prove that $a_m$ vanishes on $V \setminus Z_m^E$. 

    Consider thus a point $z_0 \in V \setminus Z_m^E$, and take a minimal free resolution
    $(K,\eta)$ of $\Ok_{z_0}/\mathcal{J}(f)_{z_0}$, which has length $< m$ since we are outside of $Z_m^E$.
    Let $b : (\bigwedge \Ok_{z_0}^{\oplus m}, \delta_f) \to (K,\eta)$ be a morphism
    induced by the identity morphism as in Proposition~\ref{propcomplexcomparison}.
    Since a minimal free resolution is a direct summand of any free resolution,
    we get an inclusion $i : (K,\eta) \to (E,\varphi)$.
    Thus, one choice of $a' : (\bigwedge \Ok_{z_0}^{\oplus m},\delta_f) \to (E,\varphi)$ would be $a' = i b$.
    Because $(K,\eta)$ has length $< m$, $b_m = 0$, and thus, $a_m' = 0$.
    Hence, there exists one choice of morphism $a : (\bigwedge \Ok_{z_0}^{\oplus m},\delta_f) \to (E,\varphi)$
    such that $a_m$ vanishes near $z_0$. We need to prove that for any choice of $a$, $a_m$ vanishes on $Z(f)$ near $z_0$.
    By Proposition~\ref{propcomplexcomparison} there exists
    $s : (\bigwedge \Ok_{z_0}^{\oplus m},\delta_f) \to (E,\varphi)$ of degree $-1$
    such that
    \begin{equation*}
        a_k - a_k' = \varphi_{k+1} s_k - s_{k-1} (\delta_f)_k.
    \end{equation*}
    In particular, if $k = m$, then $\varphi_{m+1} = 0$ because $(E,\varphi)$ has length $\leq m$,
    so
    \begin{equation*}
        a_m = a_m' + s_{m-1} (\delta_f)_m.
    \end{equation*}
    Thus, $a_m$ vanishes at $Z(f)$ since both
    $a_m'$ and $(\delta_f)_m$ vanish on $Z(f)$.
\end{proof}

\begin{proof}[Proof of Lemma~\ref{lma:annRf}]
    We let $df := df_1 \wedge \dots \wedge df_m$.
    From the proof of \cite{AndIdeals}*{Lemma~8.3} it follows that there
    exists a modification $\pi : \tilde{X} \to (\Cn,0)$, such that
    $\pi^* df \wedge R^{\pi^* f}_k$ is of the form
    \begin{equation*}
        (f_0^{m-1} df_0 \wedge \eta_1 + f_0^m \eta_2) \wedge \dbar \frac{1}{f_0^k},
    \end{equation*}
    where $f_0$ is a single holomorphic function such that $\{ f_0 = 0 \} = \{ \pi^* f = 0 \}$,
    and $\eta_1$ and $\eta_2$ are smooth forms.
    By the Poincar\'e-Lelong formula and the duality theorem,
    this equals $-2\pi i[f_0 = 0] f_0^{m-k} \eta_1$.
    If $k < m$, we thus get that $\pi^* df \wedge R^{\pi^*f}_k = 0$. 
    If $k = m$, then
    \begin{equation*}
        \pi^*(h\,df a_m)\wedge R^{\pi^* f}_m = - (2\pi i) \pi^* (ha_m) \eta_1 \wedge [f_0 = 0],
    \end{equation*}
    which is $0$ since $h a_m$ vanishes on $Z(f)$ by Lemma~\ref{lma:am}, and thus, $\pi^*(ha_m)$ vanishes
    on $\{ f_0 = 0 \} = \{ \pi^* f = 0 \}$. To conclude, $h\, df \wedge a_k R^f_k = 0$ for all $k$
    since
    \begin{equation*}
        h\, df \wedge a_k R^f_k = \pi_*(\pi^* (df \wedge ha_k) R^{\pi^* f}_k) = 0.
    \end{equation*}
\end{proof}

\begin{proof} [Proof of Theorem~\ref{thm:vasc}]
    We first prove that $\mathcal{J}(f) : \mathcal{J}ac(f) \subseteq \mathcal{J}_m(f)$.
    Let $W_m := Z(\mathcal{J}_m(f))$ be the union of the irreducible components of $Z(f)$ of codimension $m$.
    Generically on $W_m$ (more precisely, where it does not intersect any irreducible component of codimension different from $m$)
    $f$ is a complete intersection. Assume that we are at such a generic point $z$ of $W_m$.
    Take $h \in \mathcal{J}(f) : \mathcal{J}ac(f)$. Since $f$ is a complete intersection near $z$,
    it follows from the Poincar\'e-Lelong formula, \cite{CH}*{Section~3.6}, that near $z$,
    \begin{equation} \label{eq:hpl}
        h \frac{1}{(2\pi i)^m}\dbar \frac{1}{f_m}\wedge\dots\wedge\dbar\frac{1}{f_1}\wedge df_1 \wedge \dots \wedge df_m = h [f_1 = \dots = f_m = 0],
    \end{equation}
    where $[f_1 = \dots = f_m = 0]$ is the integration current along $\{ f_1 = \dots = f_m = 0 \}$ with appropriate multiplicities.
    On the other hand, by the duality theorem and the fact that $h \mathcal{J}ac(f) \subseteq \mathcal{J}(f)$,
    \begin{equation} \label{eq:hduality}
         h \dbar \frac{1}{f_m}\wedge\dots\wedge\dbar\frac{1}{f_1}\wedge df_1 \wedge \dots \wedge df_m = 0,
    \end{equation}
    so combining \eqref{eq:hpl} and \eqref{eq:hduality}, $h$ must vanish on $W_m$ near $z$. 
    Thus, $h \in \mathcal{J}_m(f)_z$ for generic $z \in W_m$, i.e., $h$ vanishes generically on $W_m$.
    By continuity, since $\mathcal{J}_m(f) = \mathcal{I}_{W_m}$, we must have $h \in \mathcal{J}_m(f)$.

    We take $(E,\varphi)$, and $a : (\bigwedge \Ok^{\oplus n},\delta_f) \to (E,\varphi)$ as above.
    We now prove the other inclusion, $\mathcal{J}_m(f) \subseteq \mathcal{J}(f) : \mathcal{J}ac(f)$.
    Take $h \in \mathcal{J}_m(f)$. Since $\ann_{\Ok} R^E = \mathcal{J}(f)$, what we want to prove is equivalent
    to that $h\, df \wedge R^E = 0$, where $df := df_1 \wedge \dots \wedge df_m$.
    We get from \eqref{eqRcomparison-component-zero} that
    \begin{equation} \label{eqrebmcomparison}
        R^E_k = a_k R^f_k + \varphi_{k+1} M_{k+1} - \dbar M_k,
    \end{equation}
    where $R^f_k$ is the part of the Bochner-Martinelli current of $f$ of bidegree $(0,k)$,
    and $M_k$ is the part of $M$ with values in $\Hom(\Ok,E_k)$.
    We are done if we can prove that $h\, df$ annihilates
    all the currents of the right-hand side of \eqref{eqrebmcomparison}.

    To begin with, $h\, df$ annihilates $a_k R^f_k$ by Lemma~\ref{lma:annRf}.
    It is thus sufficient to also prove that $h\, df$ annihilates $M_k$ for all $k$.
    Note first that $M_1 = 0$, so we use this as a starting case for a proof by induction.
    By \eqref{eq:Mellk-induction}, outside of $Z_k^E$
    \begin{equation} \label{eq:need-annihilate}
        M_k = \dbar\sigma^E_k M_{k-1} - \sigma^E_k a_{k-1} R^f_{k-1}.
    \end{equation}
    By induction and Lemma~\ref{lma:annRf}, $h\, df$ annihilates both currents on the right-hand side of \eqref{eq:need-annihilate}
    outside of $Z_k^E$, where $\sigma^E_k$ is smooth. Thus,
    $\supp (h\, df \wedge M_k) \subseteq Z^E_k$, and since $h\, df \wedge M_k$ is a $(m,k-1)$-current with support on $Z^E_k$ of codimension 
    $\geq k$ by \eqref{eq:codimZkk}, $h\, df \wedge M_k = 0$ by the dimension principle.
\end{proof}

\begin{bibdiv}
\begin{biblist}

\bib{AndIdeals}{article}{
   author={Andersson, Mats},
   title={Residue currents and ideals of holomorphic functions},
   journal={Bull. Sci. Math.},
   volume={128},
   date={2004},
   number={6},
   pages={481--512},
}

\bib{AndInt2}{article}{
   author={Andersson, Mats},
   title={Integral representation with weights. II. Division and interpolation},
   journal={Math. Z.},
   volume={254},
   date={2006},
   number={2},
   pages={315--332},
}

\bib{AndCH}{article}{
   author={Andersson, Mats},
   title={Uniqueness and factorization of Coleff-Herrera currents},
   journal={Ann. Fac. Sci. Toulouse Math.},
   volume={18},
   date={2009},
   number={4},
   pages={651--661},
}

\bib{AndResCrit}{article}{
   author={Andersson, Mats},
   title={A residue criterion for strong holomorphicity},
   journal={Ark. Mat.},
   volume={48},
   date={2010},
   number={1},
   pages={1--15},
}

\bib{AndNoeth}{article}{
   author={Andersson, Mats},
   title={Coleff-Herrera currents, duality, and Noetherian operators},
   journal={Bull. Soc. Math. France},
   volume={139},
   date={2011},
   number={4},
   pages={535--554},
}

\bib{AS2}{article}{
   author={Andersson, Mats},
   author={Samuelsson, H\aa kan},
   title={A Dolbeault-Grothendieck lemma on complex spaces via Koppelman
   formulas},
   journal={Invent. Math.},
   volume={190},
   date={2012},
   number={2},
   pages={261--297},
}

\bib{AS3}{article}{
   author={Andersson, Mats},
   author={Samuelsson, H{\aa}kan},
   title={Weighted Koppelman formulas and the $\overline\partial$-equation on an analytic space},
   journal={J. Funct. Anal.},
   volume={261},
   date={2011},
   number={3},
   pages={777--802},
}

\bib{ASS}{article}{
   author={Andersson, Mats},
   author={Samuelsson, H{\aa}kan},
   author={Sznajdman, Jacob},
   title={On the Brian\c{c}on-Skoda theorem on a singular variety},
   journal={Ann. Inst. Fourier (Grenoble)},
   volume={60},
   date={2010},
   number={2},
   pages={417--432},
}

\bib{AW1}{article}{
   author={Andersson, Mats},
   author={Wulcan, Elizabeth},
   title={Residue currents with prescribed annihilator ideals},
   journal={Ann. Sci. \'Ecole Norm. Sup.},
   volume={40},
   date={2007},
   number={6},
   pages={985--1007},
}

\bib{AW2}{article}{
   author={Andersson, Mats},
   author={Wulcan, Elizabeth},
   title={Decomposition of residue currents},
   journal={J. Reine Angew. Math.},
   volume={638},
   date={2010},
   pages={103--118},
}

\bib{AW3}{article}{
   author={Andersson, Mats},
   author={Wulcan, Elizabeth},
   title={Global effective versions of the Brian\c{c}on-Skoda-Huneke theorem},
   journal={Invent. Math.},
   volume={200},
   date={2015},
   number={2},
   pages={607--651},
}

\bib{AWPM2}{article}{
   author={Andersson, Mats},
   author={Wulcan, Elizabeth},
   title={Direct images of semi-meromorphic currents},
   journal={Ann. Inst. Fourier (Grenoble)},
   volume={68},
   date={2018},
   number={2},
   pages={875--900},
}

\bib{BjDmod}{article}{
   author={Bj{\"o}rk, Jan-Erik},
   title={$\scr D$-modules and residue currents on complex manifolds},
   status={Preprint},
   date={1996},
}

\bib{BjAbel}{article}{
   author={Bj{\"o}rk, Jan-Erik},
   title={Residues and $\scr D$-modules},
   conference={
      title={The legacy of Niels Henrik Abel},
   },
   book={
      publisher={Springer},
      place={Berlin},
   },
   date={2004},
   pages={605--651},
}

\bib{CH}{book}{
   author={Coleff, Nicolas R.},
   author={Herrera, Miguel E.},
   title={Les courants r\'esiduels associ\'es \`a une forme m\'eromorphe},
   series={Lecture Notes in Mathematics},
   volume={633},
   publisher={Springer},
   place={Berlin},
   date={1978},
}

\bib{DS}{article}{
   author={Dickenstein, A.},
   author={Sessa, C.},
   title={Canonical representatives in moderate cohomology},
   journal={Invent. Math.},
   volume={80},
   date={1985},
   number={3},
   pages={417--434},
}

\bib{DS2}{article}{
   author={Dickenstein, Alicia},
   author={Sessa, Carmen},
   title={R\'esidus de formes m\'eromorphes et cohomologie mod\'er\'ee},
   conference={
      title={G\'eom\'etrie complexe},
      address={Paris},
      date={1992},
   },
   book={
      series={Actualit\'es Sci. Indust.},
      volume={1438},
      publisher={Hermann},
      place={Paris},
   },
   date={1996},
   pages={35--59},
}

\bib{Eis}{book}{
   author={Eisenbud, David},
   title={Commutative algebra},
   series={Graduate Texts in Mathematics},
   volume={150},
   note={With a view toward algebraic geometry},
   publisher={Springer-Verlag},
   place={New York},
   date={1995},
}

\bib{GH}{book}{
   author={Griffiths, Phillip},
   author={Harris, Joseph},
   title={Principles of algebraic geometry},
   note={Pure and Applied Mathematics},
   publisher={Wiley-Interscience [John Wiley \& Sons]},
   place={New York},
   date={1978},
}
		
\bib{Hic}{article}{
   author={Hickel, M.},
   title={Une note \`a propos du Jacobien de $n$ fonctions holomorphes \`a l'origine de $\mathbb C^n$},
   journal={Ann. Polon. Math.},
   volume={94},
   date={2008},
   number={3},
   pages={245--264},
}

\bib{JW}{article}{
   author={Jonsson, Mattias},
   author={Wulcan, Elizabeth},
   title={On Bochner-Martinelli residue currents and their annihilator
   ideals},
   journal={Ann. Inst. Fourier (Grenoble)},
   volume={59},
   date={2009},
   number={6},
   pages={2119--2142}
}

\bib{LJ2}{article}{
   author={Lejeune-Jalabert, M.},
   title={Liaison et r\'esidu},
   conference={
      title={Algebraic geometry},
      address={La R\'abida},
      date={1981},
   },
   book={
      series={Lecture Notes in Math.},
      volume={961},
      publisher={Springer, Berlin},
   },
   date={1982},
   pages={233--240},
}

\bib{Lund1}{article}{
   author={Lundqvist, Johannes},
   title={A local Grothendieck duality theorem for Cohen-Macaulay ideals},
   journal={Math. Scand.},
   volume={111},
   date={2012},
   number={1},
   pages={42--52},
}

\bib{Lund2}{article}{
   author={Lundqvist, Johannes},
   title={A local duality principle for ideals of pure dimension}, 
   status={Preprint},
   date={2013},
   eprint={arXiv:1306.6252 [math.CV]},
   url={http://arxiv.org/abs/1306.6252},
}

\bib{Lar}{article}{
   author={L\"ark\"ang, Richard},
   title={Residue currents associated with weakly holomorphic functions},
   journal={Ark. Mat.},
   volume={50},
   date={2012},
   number={1},
   pages={135--164}
}

\bib{Lar2}{article}{
   author={L{\"a}rk{\"a}ng, Richard},
   title={On the duality theorem on an analytic variety},
   journal={Math. Ann.},
   volume={355},
   date={2013},
   number={1},
   pages={215--234},
}

\bib{Lar4}{article}{
   author={L{\"a}rk{\"a}ng, Richard},
   title={Residue currents with prescribed annihilator ideals on singular
   varieties},
   journal={Math. Z.},
   volume={279},
   date={2015},
   number={1-2},
   pages={333--358},
}

\bib{LarG}{article}{
   author={L\"ark\"ang, Richard},
   title={Explicit versions of the local duality theorem in $\C^n$},
   journal={Illinois J. Math.},
   volume={63},
   date={2019},
   number={1},
   pages={1--45},
}

\bib{LW}{article}{
   author={L{\"a}rk{\"a}ng, Richard},
   author={Wulcan, Elizabeth},
   title={Computing residue currents of monomial ideals using comparison
   formulas},
   journal={Bull. Sci. Math.},
   volume={138},
   date={2014},
   number={3},
   pages={376--392},
}

\bib{LW2}{article}{
   author={L\"{a}rk\"{a}ng, Richard},
   author={Wulcan, Elizabeth},
   title={Residue currents and fundamental cycles},
   journal={Indiana Univ. Math. J.},
   volume={67},
   date={2018},
   number={3},
   pages={1085--1114},
}

\bib{PMScand}{article}{
   author={Passare, Mikael},
   title={Residues, currents, and their relation to ideals of holomorphic
   functions},
   journal={Math. Scand.},
   volume={62},
   date={1988},
   number={1},
   pages={75--152},
}

\bib{PTY}{article}{
   author={Passare, Mikael},
   author={Tsikh, August},
   author={Yger, Alain},
   title={Residue currents of the Bochner-Martinelli type},
   journal={Publ. Mat.},
   volume={44},
   date={2000},
   number={1},
   pages={85--117},
}

\bib{Sz}{article}{
   author={Sznajdman, Jacob},
   title={A Brian\c{c}on-Skoda-type result for a non-reduced analytic space},
   journal={J. Reine Angew. Math.},
   volume={742},
   date={2018},
   pages={1--16},
}

\bib{Tong}{article}{
   author={{To}ng, Yue Lin L.},
   title={Integral representation formulae and Grothendieck residue symbol},
   journal={Amer. J. Math.},
   volume={95},
   date={1973},
   pages={904--917},
}

\bib{TsikhBook}{book}{
   author={{Ts}ikh, A. K.},
   title={Multidimensional residues and their applications},
   series={Translations of Mathematical Monographs},
   volume={103},
   publisher={American Mathematical Society},
   place={Providence, RI},
   date={1992},
}

\bib{Vasc}{article}{
   author={Vasconcelos, Wolmer V.},
   title={The top of a system of equations},
   note={Papers in honor of Jos\'e Adem},
   journal={Bol. Soc. Mat. Mexicana},
   volume={37},
   date={1992},
   number={1-2},
   pages={549--556},
}

\bib{Wi}{article}{
   author={Wiebe, Hartmut},
   title={\"Uber homologische Invarianten lokaler Ringe},
   journal={Math. Ann.},
   volume={179},
   date={1969},
   pages={257--274},
}

\bib{W1}{article}{
   author={Wulcan, Elizabeth},
   title={Products of residue currents of Cauchy-Fantappi\`e-Leray type},
   journal={Ark. Mat.},
   volume={45},
   date={2007},
   number={1},
   pages={157--178},
}
 
\end{biblist}

\end{bibdiv}

\end{document}